\DeclareFontFamily{U}{mathx}{\hyphenchar\font45}
\DeclareFontShape{U}{mathx}{m}{n}{
      <5> <6> <7> <8> <9> <10>
      <10.95> <12> <14.4> <17.28> <20.74> <24.88>
      mathx10
      }{}
\newcommand{\nn}[1]{{\vert\kern-0.25ex\vert\kern-0.25ex\vert #1 
    \vert\kern-0.25ex\vert\kern-0.25ex\vert}}
\newcommand{\abs}[1]{\vert #1\vert}
\newtheorem{theorem}{Theorem}[section]
\newtheorem*{theorema}{Theorem A}
\newtheorem*{theoremb}{Theorem B}
\newtheorem*{theoremc}{Theorem C}
\newtheorem{lemma}[theorem]{Lemma}
\newtheorem{corollary}[theorem]{Corollary}
\newtheorem{proposition}[theorem]{Proposition}
\theoremstyle{remark}
\newtheorem{remark}[theorem]{Remark}
\theoremstyle{definition}
\newtheorem{problem}[theorem]{Problem}
\numberwithin{equation}{section}
\newcommand{\vertiii}[1]{{\left\vert\kern-0.25ex\left\vert\kern-0.25ex\left\vert #1 
    \right\vert\kern-0.25ex\right\vert\kern-0.25ex\right\vert}}
\renewcommand{\leq}{\leqslant}
\renewcommand{\geq}{\geqslant}
\newcounter{smallromans}
\newenvironment{romanenumerate}
{\begin{list}{{\normalfont\textrm{(\roman{smallromans})}}}%
  {\usecounter{smallromans}\setlength{\itemindent}{0cm}%
   \setlength{\leftmargin}{5.5ex}\setlength{\labelwidth}{5.5ex}%
   \setlength{\topsep}{.5ex}\setlength{\partopsep}{.5ex}%
   \setlength{\itemsep}{0.1ex}}}%
{\end{list}}
\newcommand{\R}{\mathbb{R}}
\newcommand{\N}{\mathbb{N}}
\newcommand{\e}{\varepsilon}
\newcommand{\p}{\varphi}
\newcommand{\norm}{\left\Vert \cdot\right\Vert}
\newcounter{smallromansdash}
\newcounter{bigromans} 
  {\end{list}}
\begin{document}
\title[Symmetrically separated sequences]{Symmetrically separated sequences\\in the unit sphere of a Banach space}
\dedicatory{In memoriam: Joe Diestel (1943--2017)}
\subjclass[2000]{46B04, 46B20.}

\author[P.~H\'ajek]{Petr H\'ajek}
\address[P.~H\'ajek]{Mathematical Institute\\Czech Academy of Science\\\v Zitn\'a 25 \\115 67 Praha 1\\
Czech Republic and Department of Mathematics\\Faculty of Electrical Engineering\\
Czech Technical University in Prague\\ Jugosl\'avsk\'ych partyz\'an\r u 3, 166 27 Praha 6\\ Czech Republic}
\email{hajek@math.cas.cz}

\author[T.~Kania]{Tomasz Kania}
\address[T.~Kania]{Mathematics Institute, University of Warwick, Gibbet Hill Rd, Coventry, CV4 7AL, England and Mathematical Institute\\Czech Academy of Science\\\v Zitn\'a 25 \\115 67 Praha 1\\Czech Republic}
\email{tomasz.marcin.kania@gmail.com}

\author[T.~Russo]{Tommaso Russo}
\address[T.~Russo]{Dipartimento di matematica\\ Universit\`a degli Studi di Milano\\
via Saldini 50, 20133 Milano, Italy}
\email{tommaso.russo@unimi.it}

\thanks{Research of the first-named author was supported in part by GA\v CR 16-07378S, RVO: 67985840.
The second-named author acknowledges with thanks funding received from the European Research Council; ERC Grant Agreement No.~291497. 
Research of the third-named author was supported in part by the Universit\`a degli Studi di Milano (Italy) and in part by the Gruppo Nazionale per l'Analisi Matematica, la Probabilit\`a e le loro Applicazioni (GNAMPA) of the Istituto Nazionale di Alta Matematica (INdAM) of Italy.}

\keywords{Symmetrically separated unit vectors, Elton--Odell theorem, Kottman theorem, symmetric Kottman constant, unconditional basic sequence, cotype, spreading models}
\subjclass[2010]{Primary 46B04, 46B20; Secondary 46B15, 46B22}
\date{\today}

\begin{abstract}
We prove the symmetric version of Kottman's theorem, that is to say, we demonstrate that the unit sphere of an infinite-dimensional Banach space contains an~infinite subset $A$ with the property that $\|x\pm y\| > 1$ for distinct elements $x,y\in A$, thereby answering a question of J.~M.~F.~Castillo. In the case where $X$ contains an infinite-dimensional separable dual space or an unconditional basic sequence, the set $A$ may be chosen in a way that $\|x\pm y\| \geqslant 1+\varepsilon$ for some $\varepsilon > 0$ and distinct $x,y\in A$. Under additional structural properties of $X$, such as non-trivial cotype, we obtain quantitative estimates for the said $\varepsilon$. Certain renorming results are also presented.
\end{abstract}
\maketitle

\section{Introduction}
Kottman's theorem \cite{Kottman}, asserting that the unit sphere of an infinite-dimensional normed space contains a sequence of points whose mutual distances are strictly greater than one, sparked a new insight on the non-compactness of the unit ball in infinite dimensions. Elton and Odell \cite{E-O} employed methods of infinite Ramsey theory to improve Kottman's theorem significantly by showing that the unit sphere of an infinite-dimensional normed space contains a sequence $(x_n)_{n=1}^\infty$ such that $\|x_n - x_k\|\geqslant 1+\varepsilon$ ($k,n\in\N,k\neq n$) for some $\varepsilon > 0$. Even though the proof of Kottman's theorem has been greatly simplified over time (\cite[pp.~7--8]{diestel}), it was only recently when a new (still Ramsey-theoretic though) proof of the Elton--Odell theorem was obtained (\cite{FOSZ}). \smallskip

It is perhaps no surprise that the $\varepsilon$ appearing in the statement of the Elton--Odell theorem is intimately related to the geometry of the underlying space. Indeed, in the case of the space $\ell_p$ ($1\leqslant p<\infty$) it cannot be greater than the attained bound $2^{1/p}-1$ (see, \emph{e.g.}, \cite[p.~31]{atl}). Thus, studying geometric or structural properties of the space will often help in identifying possible lower bounds for separation constants of sequences in the unit sphere of the space. For example, Kryczka and Prus proved the quite remarkable result saying that in the unit sphere of a non-reflexive Banach space one may find a $\sqrt[5]{4}$-separated sequence (\cite{KrP}). Further quantitative estimates of the said lower bound expressed in terms of various moduli of convexity and related results may be found in \cite{CaGoPa, delpech, MaPa, prus,vN}.\smallskip

The main objective of the paper is to revisit and investigate the above-mentioned results in the setting of symmetric separation: let us say that a subset $A$ of a normed space is \emph{symmetrically $(\delta+)$-separated} (respectively, \emph{symmetrically $\delta$-separated}) when $\|x\pm y\| > \delta$ (respectively, $\|x\pm y\|\geqslant \delta$) for any distinct elements $x,y\in A$ ($\delta > 0$). J.~M.~F.~Castillo and P.~L.~Papini asked whether there is a symmetric version of the Elton--Odell theorem (\cite[Problem~1]{CaPa}), however according to Castillo (\cite{castillo}) prior to this research, it has not been known whether the unit sphere of an infinite-dimensional Banach space contains a~symmetrically (1+)-separated sequence. \smallskip

Castillo and Papini proved that the answer is affirmative for uniformly non-square spaces and for $\mathscr{L}_\infty$-spaces (consult \cite{CaPa} for more details). Also, although not stated explicitly, it follows from the proof of the main result of \cite{delpech} that the answer is affirmative for asymptotically uniformly convex spaces in which case the lower bound for the symmetric separation constant is expressed in terms of the so-called modulus of asymptotic uniform convexity. Certainly the unit spheres of both $\ell_1$ and $c_0$ contain symmetrically 2-separated sequences (in the former case plainly the standard vector basis is an example of such sequence, in the latter case one may take $x_n =- e_{n+1}+ \sum_{k=1}^n e_k$ ($n\in \mathbb N$), where $(e_n)_{n=1}^\infty$ denotes the unit vector basis of $c_0$). Consequently, if $X$ contains an isomorphic copy of either space, by the James distortion theorem, for every $\varepsilon \in (0,1)$ the unit sphere of $X$ contains a symmetrically ($1+\varepsilon$)-separated subset. (For more details see Lemma~\ref{obvious}.)  \smallskip

Our first main result is an extension of Kottman's theorem to symmetrically separated sequences. (The proofs of the results presented in the Introduction are postponed to subsequent sections, where the necessary terminology is also explained.)
\begin{theorema}[Symmetric version of Kottman's theorem]\label{symKottman} Let $X$ be an infinite-dimensional Banach space. Then the unit sphere of $X$ contains a symmetrically $(1+)$-separated sequence.
\end{theorema}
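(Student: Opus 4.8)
The plan is to reduce, via a classical dichotomy, to a normalized weakly null basic sequence, pass to its spreading model, and extract the symmetrically separated sequence from there. First, by Lemma~\ref{obvious}, if $X$ contains an isomorphic copy of $c_0$ or of $\ell_1$ there is nothing to prove (one even gets a symmetrically $(1+\varepsilon)$-separated sequence), so I may assume $X$ contains no copy of $\ell_1$. By Rosenthal's $\ell_1$-theorem every bounded sequence in $X$ then has a weakly Cauchy subsequence. Picking a normalized basic sequence (one exists in every infinite-dimensional Banach space), extracting a weakly Cauchy subsequence $(z_n)_{n=1}^\infty$, and passing to the normalized consecutive differences $x_n:=(z_{2n}-z_{2n-1})/\|z_{2n}-z_{2n-1}\|$ — a semi-normalized block basic sequence, hence basic — one obtains a normalized weakly null basic sequence $(x_n)_{n=1}^\infty$ in the unit sphere of $X$.

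Next, by the Brunel--Sucheston theorem a subsequence of $(x_n)_{n=1}^\infty$, which I shall not relabel, generates a spreading model: a normalized $1$-spreading basic sequence $(e_n)_{n=1}^\infty$ in a Banach space $E$ with $\|\sum_i a_ie_i\|=\lim\|\sum_i a_ix_{n_i}\|$ along increasing index sequences tending to infinity. The structural point I would exploit is that weak nullity forces $(e_n)_{n=1}^\infty$ to be suppression-$1$-unconditional, i.e. $\|\sum_{i\in F}a_ie_i\|\le\|\sum_{i\in G}a_ie_i\|$ whenever $F\subseteq G$: having fixed $x_{n_1},\dots,x_{n_{k-1}}$, choose $f\in S_{X^*}$ norming $v:=\sum_{i<k}a_ix_{n_i}$; then $\|a_kx_{n_k}+v\|\ge f(v)+a_kf(x_{n_k})=\|v\|+a_kf(x_{n_k})\to\|v\|$ as $n_k\to\infty$, since $f(x_{n_k})\to0$, and taking iterated limits together with $1$-spreading shows that deleting a coordinate cannot increase the norm. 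In particular $\|e_1\pm e_2\|\ge1$.

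Now split into cases. If $\|e_1+e_2\|>1$ and $\|e_1-e_2\|>1$ simultaneously, then, since $\|x_{n_j}\pm x_{n_k}\|\to\|e_1\pm e_2\|$ as $j\ne k$ tend to infinity, some tail $(x_{n_k})_{k\ge k_0}$ of the generating subsequence is already symmetrically $(1+)$-separated and we are done. Otherwise $\|e_1+e_2\|=1$ or $\|e_1-e_2\|=1$, i.e. the spreading model displays a $c_0$-like feature; this is the substantial case. Here I would construct the required sequence \emph{directly in $X$} by a staircase in the spirit of James's distortion argument: taking a sufficiently rapidly increasing subsequence $(x_{j_i})_{i=1}^\infty$ — chosen so as to realize the spreading model and so that finitely many prescribed functionals are almost annihilated along its tails — and a small $c>0$, set $\widehat y_k:=-c\,x_{j_{k+1}}+\sum_{i=1}^{k}x_{j_i}$ (or the variant with a $+c$ spike, according to which of $\|e_1+e_2\|,\|e_1-e_2\|$ equals $1$). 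The `doubled run' $2\sum_{i=1}^{k}x_{j_i}$ occurring in $\widehat y_k+\widehat y_l$ is bounded below by a functional norming $\sum_{i=1}^{k}x_{j_i}$, with the remaining terms absorbed using weak nullity, while the `fresh spike' at $x_{j_{k+1}}$ (which, for $l>k$, reappears inside the run of $\widehat y_l$) does the analogous job for $\widehat y_k-\widehat y_l$; after renormalizing, $(\widehat y_k/\|\widehat y_k\|)_{k}$ should be symmetrically $(1+)$-separated.

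The step I expect to be the genuine obstacle is precisely this $c_0$-like case. For the renormalized vectors to stay symmetrically $(1+)$-separated one must keep $\|\widehat y_k\|$ under tight control while at the same time forcing \emph{both} $\|\widehat y_k+\widehat y_l\|$ and $\|\widehat y_k-\widehat y_l\|$ strictly above the common norm $\|\widehat y_k\|$, and this balance is delicate: it is governed by the fundamental function $n\mapsto\|x_{j_1}+\cdots+x_{j_n}\|$ of the spreading model and by the `run-versus-spike' interaction, and when these are extreme the naive choice fails, so one is led to a further splitting and a more elaborate calibration of the spikes and of $c$. It is here that no numerical gap survives in general — which is why Theorem~A asserts only strict inequality — whereas under the additional hypotheses advertised in the abstract (an unconditional basic sequence, a subspace with separable dual, non-trivial cotype) one can instead run a cleaner, quantitative version of this construction.
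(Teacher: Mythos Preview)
Your reduction to a normalized weakly null basic sequence and the case $\min\{\|e_1+e_2\|,\|e_1-e_2\|\}>1$ in the spreading model are fine, but the remaining case is not proved: you sketch a staircase $\widehat y_k=-c\,x_{j_{k+1}}+\sum_{i\le k}x_{j_i}$ and then concede that ``the naive choice fails'' and that one is ``led to a further splitting and a more elaborate calibration'', without carrying either out. Two concrete obstructions make this more than a routine omission. First, the Brunel--Sucheston limit controls $\bigl\|\sum_{i\le k}a_ix_{n_i}\bigr\|$ only for a \emph{fixed} pattern $(a_1,\dots,a_k)$ as $n_1<\cdots<n_k\to\infty$; since $\widehat y_k$ involves $k{+}1$ terms, the spreading model gives no uniform control on $\|\widehat y_k\|$ or on $\|\widehat y_k\pm\widehat y_l\|$ as $k,l\to\infty$, so the $c_0$-style calculation in $E$ does not transfer to $X$. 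Second, $1$-suppression unconditionality together with $\|e_1\pm e_2\|=1$ does not force the spreading model to be $c_0$; if the fundamental function $n\mapsto\|e_1+\cdots+e_n\|$ is unbounded, then after normalization the spike is washed out and the minus-separation collapses, while if it is bounded you cannot conclude that $c_0\subset X$ (reflexive spaces can have $c_0$ spreading models). So precisely the alternative you label ``the substantial case'' --- which is the whole content of the theorem --- is left open.

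The paper's proof is entirely different and uses none of this machinery: no Rosenthal, no spreading models, no unconditionality. It runs a dichotomy on the property ``there exist $x\in S_{\tilde X}$ and an infinite-dimensional $Y\subset\tilde X$ with $\|x+y\|>1$ for all $y\in S_Y$''. If every subspace has it, a straightforward recursion gives the sequence. If some subspace fails it, then (via Lemma~\ref{lemma-ball-sphere}) for every $x\in B_X$ and every infinite-dimensional $Y$ one can find $y\in S_Y$ with $\|x+y\|\le1$; one fixes $z$ with $\|z\|=3/4$ and a norming functional $\psi$, inductively picks unit vectors $y_n\in\ker\psi\cap\bigcap_{i<n}\ker\varphi_i$ (each $\varphi_i$ norming $y_i$) so that $x_n:=z-\delta_1y_1-\cdots-\delta_{n-1}y_{n-1}+y_n$ stays in $B_X$, and then $\psi$ witnesses $\|x_k+x_n\|>1$ while $\varphi_k$ witnesses $\|x_k-x_n\|>1$.
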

Subsequently, we identify classes of Banach spaces for which a symmetric version of the Elton--Odell theorem holds true. We prove that spaces containing boundedly complete basic sequences satisfy a symmetric version of the Elton--Odell theorem; this theorem will be the main ingredient of Theorem B in this paper.
\begin{theorem}\label{s-e.o. for bddly complete basic sequence} Let $X$ be a Banach space that contains a boundedly complete basic sequence. Then for some $\e>0$, the unit sphere of $X$ contains a symmetrically $(1+\e)$-separated sequence.
\end{theorem}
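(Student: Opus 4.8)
The plan is to work inside $Y:=\overline{\operatorname{span}}\{e_n\}$, normalize $(e_n)$, produce the required sequence in $S_Y\subseteq S_X$, and --- via routine passages to subsequences --- reduce matters to the following normal form: a \emph{weakly null} boundedly complete basic sequence that is moreover $(1+\delta)$-separated for some fixed $\delta>0$. If $Y$ (equivalently $X$) contains an isomorphic copy of $\ell_1$, then Lemma~\ref{obvious} together with the James distortion theorem already give a symmetrically $(2-\e)$-separated sequence in $S_X$ for every $\e\in(0,1)$, so assume it does not. By Rosenthal's $\ell_1$ theorem $(e_n)$ then has a weakly Cauchy subsequence; passing to the normalized consecutive differences $\|e_{n_{2k}}-e_{n_{2k-1}}\|^{-1}(e_{n_{2k}}-e_{n_{2k-1}})$, or to $\|e_{n_k}-y\|^{-1}(e_{n_k}-y)$ if the subsequence converges weakly to $y$, produces a normalized \emph{weakly null} block basic sequence of $(e_n)$, hence again boundedly complete. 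Invoking the Elton--Odell theorem and a Bessaga--Pe\l czy\'nski perturbation, pass further to a $(1+\delta)$-separated normalized block sequence, still weakly null and boundedly complete; denote it $(u_n)$.

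Pass to a subsequence of $(u_n)$ generating a spreading model $(\widetilde e_n)$. Since $(u_n)$ is weakly null, $(\widetilde e_n)$ is suppression-$1$-unconditional, so $1=\mu_1\le\mu_2\le\cdots$ where $\mu_k:=\|\widetilde e_1+\cdots+\widetilde e_k\|$; and $\|\widetilde e_n-\widetilde e_m\|\geq 1+\delta$ for $n\neq m$ is inherited from the separation of $(u_n)$. For a block length $k_0$ set $x_j:=\mu_{k_0}^{-1}\sum_{i\in B_j}\widetilde e_i$, with $B_1<B_2<\cdots$ successive blocks of length $k_0$, so that $\|x_j\|=1$; for $j\neq l$, the spreading property gives $\|x_j+x_l\|=\mu_{2k_0}/\mu_{k_0}$, while suppression-unconditionality --- retaining one summand from each of $B_j$, $B_l$ --- gives $\|x_j-x_l\|\geq(1+\delta)/\mu_{k_0}$. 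Now if $\sup_k\mu_k>1$, take $k_0:=2^{j^*}$ where $j^*:=\min\{j\geq0:\mu_{2^j}<\mu_{2^{j+1}}\}$: then $\mu_{k_0}=1$ and $\mu_{2k_0}>1$, so $\|x_j+x_l\|=\mu_{2k_0}>1$ and $\|x_j-x_l\|\geq 1+\delta>1$, whence $(x_j)$ is symmetrically $(1+\e)$-separated in the spreading model; a gliding-hump and diagonal argument then transplants it into $S_Y\subseteq S_X$ at the cost of discarding finitely many initial terms.

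There remains the residual case $\sup_k\mu_k=1$, i.e., $\|\widetilde e_1+\cdots+\widetilde e_k\|=1$ for every $k$ --- the spreading model is $\ell_\infty$-like on the positive cone, yet still $\|\widetilde e_n-\widetilde e_m\|\geq1+\delta$. Here asymptotic blocks of $(u_n)$ are of no help, and one abandons the spreading model, exploiting instead bounded completeness of $(u_n)$ in the sharp form $\|u_1+\cdots+u_N\|_Y\to\infty$ --- valid since otherwise $\sum_n u_n$ would converge, contradicting $\|u_n\|=1$, and \emph{not} in conflict with the foregoing because it concerns \emph{consecutive} rather than \emph{asymptotic} sums --- in order to pinpoint a scale at which suitable consecutive blocks $\sum_{i\in I_j}u_i$, of suitably increasing lengths and combined with the separation $\|u_n-u_m\|\geq1+\delta$, yield a symmetrically $(1+\e)$-separated sequence.

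The step I expect to be the genuine obstacle is this last, $\ell_\infty$-like, residual case; and more broadly, throughout the argument, the crux is upgrading the cheap ``$\geq 1$'' estimates supplied by basis projections, by weak lower semicontinuity of the norm, and by suppression-unconditionality, to \emph{strict} ``$>1$'' estimates. It is precisely at those points that bounded completeness --- rather than mere reflexivity or weak nullity --- is indispensable.
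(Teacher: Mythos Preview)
Your proposal follows a route quite different from the paper's, and it contains a genuine gap that you yourself flag: the ``$\ell_\infty$-like'' residual case where $\mu_k=\|\tilde e_1+\cdots+\tilde e_k\|=1$ for all $k$. You do not prove anything there; you only gesture at the divergence $\|u_1+\cdots+u_N\|\to\infty$ and hope that some block construction will work. But this case is \emph{not} vacuous: there exist suppression-$1$-unconditional spreading norms with $\mu_k\equiv1$ and $\|\tilde e_1-\tilde e_2\|\geq 1+\delta$, and there is no obvious reason such a spreading model cannot arise from a boundedly complete sequence. The divergence of \emph{initial} sums $\|u_1+\cdots+u_N\|$ is compatible with all \emph{asymptotic} sums being bounded by $1+\eta$, and you give no mechanism for converting the former into symmetric separation. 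Moreover, your non-residual case already consumes the Elton--Odell theorem as a black box, so even where the argument works it is not self-contained.

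By contrast, the paper's proof avoids spreading models, weak nullity, Rosenthal's theorem, and Elton--Odell entirely. After passing (via Lemma~\ref{lemma: basic sequence almost monotone}) to a boundedly complete basic sequence whose projections satisfy $\|P_j\|\leq 1+\e_j$ with $\sum\e_j<\infty$, it runs a maximal-family argument: either some symmetrically $(1+\e_n)$-separated finite block family extends indefinitely (done), or for every $n$ there is a maximal such family $\mathcal B_n$ sitting beyond $\mathcal B_{n-1}$. In the second alternative one uses each $\mathcal B_{n+1}$ to witness the non-extendability of $\mathcal B_n$, producing for every $k$ a string $\sigma_1 b^{(1)}_{n_1}+\cdots+\sigma_k b^{(k)}_{n_k}$ of signed unit blocks with all partial sums bounded by a constant independent of $k$. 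A pigeonhole/K\"onig-lemma extraction then yields an infinite string with uniformly bounded partial sums of unit blocks---which contradicts bounded completeness directly. The whole engine is combinatorial, and bounded completeness enters exactly once, cleanly, at the end; this is precisely the step your residual case was groping for.
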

In a reflexive Banach space, every basic sequence is boundedly complete (\cite[Theorem~1]{james-bases}; see also \cite[Theorem~3.2.13]{ak}), hence we arrive at the following corollary.
\begin{corollary}Let $X$ be an infinite-dimensional reflexive Banach space. Then for some $\varepsilon > 0$ the unit sphere of $X$ contains a symmetrically $(1+\varepsilon)$-separated sequence. \end{corollary}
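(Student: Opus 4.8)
The plan is to read off the corollary as a formal consequence of Theorem~\ref{s-e.o. for bddly complete basic sequence}; the entire content is to exhibit a boundedly complete basic sequence inside an arbitrary infinite-dimensional reflexive space $X$.

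First I would invoke the classical Banach--Mazur theorem guaranteeing that every infinite-dimensional Banach space contains a normalized basic sequence $(e_n)_{n=1}^\infty$ (see, e.g., \cite{ak}); let $Y:=\overline{\operatorname{span}}\{e_n:n\in\N\}$ be its closed linear span, so that $(e_n)_{n=1}^\infty$ is a Schauder basis of $Y$.

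Next, since reflexivity passes to closed subspaces, $Y$ is reflexive, and hence---by the result already quoted in the line preceding the statement, namely \cite[Theorem~1]{james-bases} (see also \cite[Theorem~3.2.13]{ak})---the basis $(e_n)_{n=1}^\infty$ of $Y$ is boundedly complete (in fact it is simultaneously boundedly complete and shrinking). Thus $X$ contains a boundedly complete basic sequence, and Theorem~\ref{s-e.o. for bddly complete basic sequence} applies verbatim, producing an $\varepsilon>0$ for which the unit sphere of $X$ carries a symmetrically $(1+\varepsilon)$-separated sequence.

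There is no genuine obstacle here: the statement is a formal corollary. The only external input is the implication ``reflexive $\Rightarrow$ every basic sequence is boundedly complete'', which is the standard James criterion and is cited verbatim in the surrounding text, so I would simply quote it rather than reprove it.
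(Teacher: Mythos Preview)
Your argument is correct and matches the paper's own one-line derivation: invoke the existence of a basic sequence in $X$, note that in a reflexive space every basic sequence is boundedly complete via James' theorem, and apply Theorem~\ref{s-e.o. for bddly complete basic sequence}. The only difference is that you spell out the Banach--Mazur step and the passage to the closed span explicitly, whereas the paper leaves these implicit.
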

The above observation may be extended to more general spaces, as Johnson and Rosenthal proved that if $X$ is isomorphic to an infinite-dimensional subspace of a separable dual space, then it contains a boundedly complete basic sequence (\cite[Theorem~IV.1.(ii)]{JoRo}). Notably, spaces with the Radon--Nikodym property, or more generally, spaces with the so-called \emph{point-of-continuity property} (in short, PCP) contain separable dual spaces; consult \cite[Corollary~II.1]{GhMa} for the exact definition, the proof and the relation to the Radon--Nikodym property. We may record then the following corollary to Theorem~\ref{s-e.o. for bddly complete basic sequence}.

\begin{corollary}\label{PCP}Suppose that $X$ contains a subspace isomorphic to a subspace of a separable dual space. Then for some $\varepsilon > 0$ the unit sphere of $X$ contains a symmetrically $(1+\varepsilon)$-separated sequence. \smallskip

Consequently, the assertion holds true in the case where $X$ has the Radon--Nikodym property (or more generally, PCP).
\end{corollary}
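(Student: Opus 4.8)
The plan is to obtain the corollary as a direct consequence of Theorem~\ref{s-e.o. for bddly complete basic sequence} and the Johnson--Rosenthal theorem recalled just above its statement; the only point to verify is that a boundedly complete basic sequence living in an abstract subspace can be relocated, still boundedly complete, inside $X$ itself. First I would use the hypothesis to fix an infinite-dimensional subspace $Y$ of $X$ together with an isomorphism $T$ from $Y$ onto an infinite-dimensional subspace $Z$ of a separable dual space. (In the first assertion, this infinite-dimensional subspace is precisely what ``$X$ contains a subspace isomorphic to a subspace of a separable dual space'' is meant to provide; for the ``consequently'' clause one produces such a $Y$ inside any $X$ with the point-of-continuity property --- in particular with the Radon--Nikodym property --- by appealing to \cite[Corollary~II.1]{GhMa}.) By \cite[Theorem~IV.1.(ii)]{JoRo}, $Z$ contains a boundedly complete basic sequence $(z_n)_{n=1}^\infty$.

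Then I would set $y_n = T^{-1}z_n$ for $n \in \N$. Since $T^{-1}\colon Z \to Y$ is an isomorphism, $(y_n)_{n=1}^\infty$ is a basic sequence in $Y$, hence also a basic sequence in $X$. It remains boundedly complete: if $\sup_N \|\sum_{n=1}^N a_n y_n\| < \infty$, then applying $T$ and using $\|\sum_{n=1}^N a_n z_n\| \le \|T\| \cdot \|\sum_{n=1}^N a_n y_n\|$ shows that the partial sums of $\sum_n a_n z_n$ are bounded; this series therefore converges by bounded completeness of $(z_n)_{n=1}^\infty$, whence $\sum_n a_n y_n = T^{-1}\big(\sum_n a_n z_n\big)$ converges as well. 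Thus $X$ contains a boundedly complete basic sequence, and Theorem~\ref{s-e.o. for bddly complete basic sequence} furnishes $\varepsilon > 0$ and a symmetrically $(1+\varepsilon)$-separated sequence in the unit sphere of $X$.

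I do not expect a genuine obstacle --- the statement is a formal deduction from results already in hand. The two points deserving a moment of care are (i) ensuring that the subspace supplied by the hypothesis (respectively, by \cite{GhMa} in the PCP case) is infinite-dimensional, so that \cite{JoRo} does return an \emph{infinite} basic sequence, and (ii) the elementary verification, sketched above, that bounded completeness of a basic sequence is preserved both under isomorphisms and under passing to a larger ambient space.
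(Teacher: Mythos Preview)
Your proposal is correct and follows exactly the route the paper takes: invoke Johnson--Rosenthal (and Ghoussoub--Maurey for the PCP clause) to obtain a boundedly complete basic sequence, then apply Theorem~\ref{s-e.o. for bddly complete basic sequence}. The only cosmetic difference is that Johnson--Rosenthal's theorem is already stated for spaces \emph{isomorphic} to a subspace of a separable dual, so you may apply it directly to $Y$ and skip the explicit transfer along $T^{-1}$---but your verification that bounded completeness passes through an isomorphism is of course correct.
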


For (a space isomorphic to) a Banach lattice $X$ we have the following three, not necessarily exclusive, possibilities: $X$ is reflexive, $X$ contains a subspace isomorphic to $c_0$ or $X$ contains a subspace isomorphic to $\ell_1$ (\emph{cf}.~\cite[Theorem~1.c.5]{LiTzaII}). By Theorem~\ref{s-e.o. for bddly complete basic sequence} and our considerations from the fourth paragraph we thus obtain the following result.
\begin{theoremb}\label{Banach lattices are good}Suppose that $X$ is a Banach space that contains an infinite-dimensional subspace isomorphic to a Banach lattice (for example, a space with an unconditional basis). Then for some $\varepsilon > 0$ the unit sphere of $X$ contains a symmetrically $(1+\varepsilon)$-separated sequence.\end{theoremb}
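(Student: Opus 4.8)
The plan is to reduce Theorem~B to Theorem~\ref{s-e.o. for bddly complete basic sequence} together with the structural trichotomy for Banach lattices and the elementary Lemma~\ref{obvious}. Fix an infinite-dimensional subspace $Y\subseteq X$ that is isomorphic to a Banach lattice $L$. By \cite[Theorem~1.c.5]{LiTzaII}, $L$ is reflexive, or contains an isomorphic copy of $c_0$, or contains an isomorphic copy of $\ell_1$. Since reflexivity and the containment of $c_0$ or of $\ell_1$ are all invariant under isomorphism, the same trichotomy applies to $Y$, and hence to $X$ (which contains $Y$).

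If $Y$ is reflexive, then---being infinite-dimensional---it contains a basic sequence $(y_n)_{n=1}^\infty$, and every basic sequence in a reflexive space is boundedly complete by \cite[Theorem~1]{james-bases}. Thus $X$ contains a boundedly complete basic sequence, so Theorem~\ref{s-e.o. for bddly complete basic sequence} produces $\varepsilon>0$ and a symmetrically $(1+\varepsilon)$-separated sequence in the unit sphere of $X$.

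If instead $X$ contains an isomorphic copy of $c_0$ or of $\ell_1$, we invoke the discussion in the fourth paragraph of the Introduction: both $c_0$ and $\ell_1$ admit symmetrically $2$-separated sequences in their unit spheres, so, by the James distortion theorem in the form of Lemma~\ref{obvious}, for every $\varepsilon\in(0,1)$ the unit sphere of $X$ contains a symmetrically $(1+\varepsilon)$-separated subset. In either case the conclusion of Theorem~B holds. The final parenthetical claim is immediate, since a space with an unconditional basis is a Banach lattice under the coordinatewise order.

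I do not expect a genuine obstacle: the argument is a three-way case analysis, each branch of which is closed by a result already in hand. The only subtlety worth spelling out is the legitimacy of applying \cite[Theorem~1.c.5]{LiTzaII}, which is stated for Banach lattices, to the merely lattice-\emph{isomorphic} subspace $Y$; this is handled by noting that the three alternatives in that theorem are isomorphic invariants, so one applies it to $L$ and transfers the outcome to $Y$.
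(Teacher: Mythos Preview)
Your argument is correct and follows exactly the route the paper takes: the trichotomy from \cite[Theorem~1.c.5]{LiTzaII} reduces the problem to the reflexive case (handled by Theorem~\ref{s-e.o. for bddly complete basic sequence} via James' theorem that basic sequences in reflexive spaces are boundedly complete) and to the $c_0$/$\ell_1$ case (handled by the James distortion theorem through Lemma~\ref{obvious}). Your remark on transferring the trichotomy from $L$ to the isomorphic subspace $Y$ via isomorphic invariance is the only point the paper leaves implicit, and you have it right.
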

An infinite-dimensional Banach space is \emph{hereditarily indecomposable} when it does not contain any closed subspace $Y$ that may be decomposed as the direct sum $Y = W\oplus Z$ of two closed, infinite-dimensional subspaces $W,Z$. Gowers' Dichotomy Theorem (\cite{gowers}) asserts that an infinite-dimensional Banach space contains an infinite-dimensional subspace with an unconditional basis or a hereditarily indecomposable subspace. Certainly, hereditarily indecomposable spaces do not contain unconditional basic sequences.\smallskip

Even though the original example of a hereditarily indecomposable space was reflexive, there exist hereditarily indecomposable spaces without reflexive subspaces; the first example is due to Gowers \cite{gowersspace}. However, Gowers' space admits an equivalent uniformly Kadets--Klee norm (\cite[Corollary~10]{DiGiKu}), so it has PCP since it does not contain $\ell_1$ (\cite[Proposition~2]{DiGiKu}). Consequently, Corollary~\ref{PCP} applies to any renorming of Gowers' space. \smallskip

More recently, Argyros and Motakis (\cite{ArgMo}) constructed a $\mathscr{L}_\infty$-space $X_{{\rm AM}}$ without reflexive subspaces whose dual is isomorphic to $\ell_1$. In particular, $X_{{\rm AM}}$ is an Asplund space containing weakly Cauchy sequences that do not converge weakly, so the unit ball of $X_{{\rm AM}}$ is not completely metrisable in the relative weak topology. By \cite[Theorem~A]{EdWh}, $X_{{\rm AM}}$ fails PCP; the same reasoning applies to any closed subspace of $X_{{\rm AM}}$. (We are indebted to Pavlos Motakis for having explained this to us.) Nevertheless, $X_{{\rm AM}}$ being a $\mathscr{L}_\infty$-space, by a result of Castillo and Papini, contains a symmetrically $(1+\varepsilon)$-separated sequence in the unit sphere for some $\varepsilon > 0$.\smallskip

We then turn our attention to classes of spaces where a lower bound for the $\varepsilon$ appearing in the statement of Theorem B may be computed explicitly. To wit, we prove the following theorem.

\begin{theoremc}Let $X$ be an infinite-dimensional Banach space. Suppose that either
\begin{romanenumerate}
\item $X$ contains a normalised basic sequence satisfying a lower $q$-estimate for some $q<\infty$, or
\item $X$ has finite cotype $q$.
\end{romanenumerate}

Then for every $\e>0$ the unit sphere of $X$ contains a symmetrically $(2^{1/q}-\e)$-separated sequence.\end{theoremc}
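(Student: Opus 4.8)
\emph{Proof plan.}
The plan is to prove both parts by a single argument whose engine is Krivine's theorem.
In each case we would begin with a normalised basic sequence $(x_n)$ in $X$ --- in case (i) the one we are given, in case (ii) an arbitrary one --- and, passing to a subsequence, assume by the Brunel--Sucheston theorem that $(x_n)$ generates a spreading model $(e_n)$, a $1$-spreading normalised basic sequence in a Banach space $E$.
In case (i) the sequence $(e_n)$, and hence every block subspace of it, inherits a lower $q$-estimate from $(x_n)$ with the same constant; in case (ii) the space $E$ has cotype $q$ with the same constant as $X$, since the cotype inequality is tested on finitely many vectors and thus passes to spreading models and to subspaces.
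Krivine's theorem applied to $(e_n)$ then produces $p\in[1,\infty]$ with $\ell_p$ block finitely representable in $(e_n)$, and testing the lower $q$-estimate (resp.\ the cotype-$q$ inequality) on coefficient vectors of the form $(\pm1,\dots,\pm1)$ of an almost isometric copy of $\ell_p^n$ sitting in the block span of $(e_n)$ forces $p\le q$; in particular $p<\infty$.

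Next, fix $\e>0$.
Krivine's theorem, applied inside the $1$-spreading sequence $(e_n)$, provides a normalised block vector $v=\sum_{j=1}^{L}c_je_j$ whose two successive translates $y_1=\sum_{j=1}^{L}c_je_j$ and $y_2=\sum_{j=1}^{L}c_je_{L+j}$ are $(1+\e)$-equivalent to the unit vector basis of $\ell_p^2$ (for $1$-spreading sequences the blocks realising Krivine's theorem may be taken to be translates of a single block vector), so that $\|y_1\pm y_2\|\ge(1-\e)2^{1/p}\ge(1-\e)2^{1/q}$.
Setting $u_i=\sum_{j=1}^{L}c_je_{(i-1)L+j}$ for $i\in\N$, the $1$-spreading of $(e_n)$ gives $\|u_i\|=\|v\|=1$ and, for $i\ne m$, $\|u_i\pm u_m\|=\|y_1\pm y_2\|\ge(1-\e)2^{1/q}$ --- the point being that this norm depends only on the pattern $(c_1,\dots,c_L,\pm c_1,\dots,\pm c_L)$ and not on the position of the two blocks.
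Thus $(u_i)_{i=1}^{\infty}$ is a symmetrically $\big((1-\e)2^{1/q}\big)$-separated sequence in the unit sphere of $E$, each $u_i$ supported on exactly $L$ consecutive basis vectors.
Because these supports have uniformly bounded length, this sequence transfers back to $X$ with arbitrarily small loss: choosing $M$ so large that any combination with coefficients of modulus $\le\max_j|c_j|$ of $2L$ successive members $x_{n_1},\dots,x_{n_{2L}}$ ($M\le n_1<\dots<n_{2L}$) has $X$-norm within a prescribed $\delta>0$ of the $E$-norm of the corresponding combination of $e_1,\dots,e_{2L}$, and putting $\widetilde u_i=\sum_{j=1}^{L}c_jx_{M+(i-1)L+j}$, one obtains $\big|\|\widetilde u_i\|-1\big|\le\delta$ and $\|\widetilde u_i\pm\widetilde u_m\|\ge(1-\e)2^{1/q}-\delta$ for $i\ne m$, again using $1$-spreading to identify the relevant $E$-norms.
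Normalising produces a sequence in the unit sphere of $X$ that is symmetrically $\big((1-\e)2^{1/q}-\delta'\big)$-separated with $\delta'\to0$ as $\delta\to0$, and choosing $\e,\delta$ small enough this exceeds $2^{1/q}-\e_0$ for the prescribed $\e_0>0$, which is the assertion.

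\emph{The main obstacle} is the interplay of Krivine's theorem with the two hypotheses.
We need the \emph{almost isometric} $\ell_p$-structure --- not merely a lower $q$-estimate or a cotype bound with a generic constant $C\ge1$, which would only yield roughly $2^{1/q}/C$-separation, well short of the target --- and we need the Krivine index to be at most $q$; it is precisely here that the lower $q$-estimate, respectively the finite cotype $q$, is used (together with the point that one can work with translates of a single block, so that the resulting separated sequence is genuinely infinite).
Once this is in hand, the rest is soft: $1$-spreading forces all pairwise distances among the tiled blocks to coincide, so a single good pair already yields an infinite symmetrically separated sequence, and the uniformly bounded supports make the transfer to $X$ routine.
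A minor preliminary worth isolating as a lemma is the inheritance of the lower $q$-estimate, respectively of the cotype constant, by the spreading model and its block subspaces.
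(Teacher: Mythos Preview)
Your route via spreading models and Krivine's theorem is quite different from the paper's, which is entirely elementary. For case~(i) the paper simply observes that the lower $q$-estimate gives a bounded operator $T\colon\overline{\mathrm{span}}\{x_n\}\to\ell_q$, $Tx_n\mapsto e_n$; since $\varrho_k:=\|T|_{X_k}\|$ (with $X_k=\overline{\mathrm{span}}\{x_n:n\ge k\}$) decreases to its infimum $\gamma$, one passes to a tail so that $\gamma/\varrho$ is as close to $1$ as desired, and then picks finitely supported unit vectors $y_n$ with disjoint supports and $\|Ty_n\|>\tilde\gamma$ close to $\gamma$, giving $\|y_n\pm y_m\|\ge\varrho^{-1}\bigl(\|Ty_n\|^q+\|Ty_m\|^q\bigr)^{1/q}\ge(\tilde\gamma/\varrho)\,2^{1/q}$. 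Case~(ii) is then reduced to~(i) via the standard fact that a weakly null sequence in a cotype-$q$ space has, for every $r>q$, a subsequence with a lower $r$-estimate.

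Your argument is essentially sound in case~(ii): cotype is a property of the \emph{space}, it passes to the spreading model $E$ (which is finitely representable in $X$), and can therefore be tested on the block vectors realising $\ell_p^n$ to force $p\le q$. In case~(i), however, there is a real gap at precisely this step. A~lower $q$-estimate is a property of the \emph{sequence} $(e_n)$, not of the ambient space: the inequality $c\bigl(\sum|a_j|^q\bigr)^{1/q}\le\bigl\|\sum a_je_j\bigr\|$ tells you nothing directly about $\bigl\|\sum b_iv_i\bigr\|$ when the $v_i$ are blocks. Your assertion that ``every block subspace of $(e_n)$ inherits a lower $q$-estimate with the same constant'' is unjustified: writing $v_i=\sum_{j\in A_i}c_je_j$ with $\|v_i\|=1$, the hypothesis only yields $\bigl(\sum_{j\in A_i}|c_j|^q\bigr)^{1/q}\le 1/c$, an \emph{upper} bound on the very quantity you would need bounded below. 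Moreover the Krivine block realising $\ell_p^N$ varies with $N$, so any implicit constant may degenerate as $N\to\infty$, and the deduction $p\le q$ does not follow. The paper's $\gamma/\varrho$ trick sidesteps this entirely by squeezing the operator norm from above and below on tails rather than trying to push the estimate through blocks.
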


\section{Preliminaries}
Normed spaces studied in this paper are either real or complex and the results are valid in both cases. Let $X$ be a normed space. We denote by $B_X$ and $S_X$, respectively the unit ball of $X$ and and the unit sphere of $X$.

\subsection{Two elementary lemmata} This short section is devoted to a very simple result that we require for the justification of the observation made in the introduction asserting that if $X$ contains an isomorphic copy of either $c_0$ or $\ell_1$, then for every $\varepsilon \in (0,1)$ the unit sphere of $X$ contains a symmetrically ($2-\varepsilon$)-separated subset. \smallskip

We shall make use of the following well-known inequality; its proof can be found for instance \cite[Lemma~2.2]{KaKo} or (in a slightly weaker formulation) in \cite[Lemma~6]{MaSwWe}. On the other hand, the present formulation or similar estimates can surely be found in older papers scattered throughout the literature. Let us just mention \cite[Lemma 3.1]{MaPa old}, where a very similar statement (actually, under slightly more general assumptions) can be found.
\begin{lemma}\label{lemma-ball-sphere} Let $X$ be a normed space. Suppose that $x,y$ are non-zero vectors in the unit ball of $X$. If $\|x-y\|\geq 1$, then
$$\left\Vert \frac{x}{\|x\|} - \frac{y}{\|y\|}\right\Vert \geq \|x-y\|.$$
\end{lemma}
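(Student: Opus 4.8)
The plan is to normalise the two vectors and then reduce the statement to a one-line scalar inequality in which both hypotheses ($\|x-y\|\geq 1$ and $\|y\|\leq 1$) are used. Assume without loss of generality that $\|x\|\leq\|y\|$, and put $a=\|x\|$, $b=\|y\|$, so $0<a\leq b\leq 1$; write $u=x/\|x\|$ and $v=y/\|y\|$, which lie in $S_X$. The goal is to bound $\|u-v\|$ from below, and the starting point is the elementary identity $x-y=au-bv=a(u-v)-(b-a)v$.

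First I would apply the triangle inequality to this identity, using $\|v\|=1$ and $b-a\geq 0$: this gives $\|x-y\|\leq a\|u-v\|+(b-a)$, and hence, dividing by $a>0$,
$$\|u-v\|\geq\frac{\|x-y\|-(b-a)}{a}.$$
(Note that the displayed inequality holds regardless of the sign of the numerator, since the left-hand side is nonnegative.)

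It then remains to check the purely arithmetic claim that $\dfrac{\|x-y\|-(b-a)}{a}\geq\|x-y\|$. Clearing the positive denominator, this is equivalent to $(1-a)\|x-y\|\geq b-a$. Here the two hypotheses enter together: from $b\leq 1$ we get $b-a\leq 1-a$, while from $\|x-y\|\geq 1$ and $1-a\geq 0$ we get $(1-a)\|x-y\|\geq 1-a$; chaining these two estimates yields the claim. Combining it with the previous display gives $\|u-v\|\geq\|x-y\|$, which is the assertion.

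I do not expect a genuine obstacle here; the only point requiring a little care is choosing the decomposition $au-bv=a(u-v)-(b-a)v$ (rather than, say, grouping the $u$-terms), and observing that the hypothesis $\|x-y\|\geq 1$ is exactly what is needed to absorb the discrepancy term $b-a$ produced by the differing norms of $x$ and $y$. Without that hypothesis the conclusion genuinely fails — for instance when $x$ and $y$ are nearly equal and of small norm — so the argument cannot avoid invoking it at this step.
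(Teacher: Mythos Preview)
Your proof is correct. The decomposition $x-y=a(u-v)-(b-a)v$ together with the triangle inequality gives $\|u-v\|\geq\bigl(\|x-y\|-(b-a)\bigr)/a$, and the scalar inequality $(1-a)\|x-y\|\geq 1-a\geq b-a$ (using $\|x-y\|\geq 1$ and $b\leq 1$) closes the argument cleanly.

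As for comparison: the paper does not actually supply a proof of this lemma. It states the inequality as well known and refers the reader to \cite[Lemma~2.2]{KaKo} and \cite[Lemma~6]{MaSwWe} for proofs, so there is no ``paper's own proof'' to weigh your argument against. Your write-up is a self-contained verification that could serve in place of those citations; it is essentially the standard short argument one finds in the cited sources.
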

It follows in particular that, if $\left\Vert \frac{x}{\|x\|} - \frac{y}{\|y\|}\right\Vert \leq 1$, then $\|x-y\|\leq 1$ too. Moreover, the lemma assures that in order to find a (symmetrically) (1+)-separated sequence of unit vectors it is in fact sufficient to find such a sequence in the unit ball, with no need to insist that all vectors are normalised (and the analogous assertion for $(1+\e)$-separation).

\begin{lemma}\label{obvious}
Let $(X,\norm_X)$ and $(Y,\norm_Y)$ be normed spaces and let $A\subseteq S_X$ be a (symmetrically) $(1+\e)$-separated set $(\e>0)$. Suppose that $T\colon X\to Y$ is an isomorphic embedding such that $\|T\|\cdot\|T^{-1}\|\leq 1+\delta$ $(\delta > 0)$. If $\delta\leq\frac{\e}{2+\e}$, then the set 
$$\tilde{A}:=\left\{\frac{Tx}{\|Tx\|_Y}\colon x\in A\right\}\subseteq S_Y$$
is (symmetrically) $(1+\e/2)$-separated.
\end{lemma}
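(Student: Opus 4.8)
The plan is to normalise $T$ and then reduce to Lemma~\ref{lemma-ball-sphere} in order to pass from the unit ball back to the unit sphere. Since replacing $T$ by $cT$ for a positive scalar $c$ changes neither the set $\tilde{A}$ nor the product $\|T\|\cdot\|T^{-1}\|$, I would first assume $\|T^{-1}\|=1$, so that $\|z\|_X\leq\|Tz\|_Y\leq(1+\delta)\|z\|_X$ for every $z\in X$. In particular, for $x\in S_X$ one has $1\leq\|Tx\|_Y\leq 1+\delta$; hence $Tx\neq 0$, the vector $Tx/(1+\delta)$ lies in $B_Y$, and $\tilde{A}$ is a well-defined subset of $S_Y$.

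Next, for distinct $x,y\in A$ I would set $u:=Tx/(1+\delta)$ and $v:=Ty/(1+\delta)$, two non-zero elements of $B_Y$. Since $A$ is $(1+\e)$-separated, $\|Tx-Ty\|_Y=\|T(x-y)\|_Y\geq\|x-y\|_X\geq 1+\e$, whence
$$\|u-v\|_Y\ \geq\ \frac{1+\e}{1+\delta}\ \geq\ 1,$$
the last inequality using $\delta\leq\frac{\e}{2+\e}\leq\e$. Lemma~\ref{lemma-ball-sphere} then applies to $u$ and $v$; as $u/\|u\|_Y=Tx/\|Tx\|_Y$ and $v/\|v\|_Y=Ty/\|Ty\|_Y$, it yields
$$\left\|\frac{Tx}{\|Tx\|_Y}-\frac{Ty}{\|Ty\|_Y}\right\|_Y\ \geq\ \|u-v\|_Y\ \geq\ \frac{1+\e}{1+\delta}.$$
If $A$ is moreover symmetrically $(1+\e)$-separated, the same argument with $v$ replaced by $-v$ — legitimate because $\|u+v\|_Y=\|T(x+y)\|_Y/(1+\delta)\geq\|x+y\|_X/(1+\delta)\geq(1+\e)/(1+\delta)\geq 1$ — gives the identical lower bound for $\bigl\|\frac{Tx}{\|Tx\|_Y}+\frac{Ty}{\|Ty\|_Y}\bigr\|_Y$.

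Finally, I would record the elementary fact that $\frac{1+\e}{1+\delta}\geq 1+\frac{\e}{2}$ is equivalent to $\e\geq\delta(2+\e)$, i.e.\ to $\delta\leq\frac{\e}{2+\e}$, which is precisely the hypothesis; combined with the displayed estimates, this shows that $\tilde{A}$ is (symmetrically) $(1+\e/2)$-separated, the distinctness of its members being immediate from the bound. I do not anticipate any genuine difficulty: the only points deserving a moment's care are verifying that the rescaled vectors $Tx/(1+\delta)$ lie in $B_Y$ and that $\|u-v\|_Y\geq 1$ (and $\|u+v\|_Y\geq 1$ in the symmetric case), so that Lemma~\ref{lemma-ball-sphere} is invoked within its hypotheses; everything else is a one-line computation.
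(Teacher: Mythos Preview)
Your proof is correct and follows essentially the same route as the paper's: scale $T$ so that $\|z\|_X\leq\|Tz\|_Y\leq(1+\delta)\|z\|_X$, push the images into $B_Y$ by dividing by $1+\delta$, and invoke Lemma~\ref{lemma-ball-sphere}. You even supply a few details the paper leaves implicit (the algebraic equivalence $\frac{1+\e}{1+\delta}\geq 1+\e/2 \Leftrightarrow \delta\leq\frac{\e}{2+\e}$, and the distinctness of the images).
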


\begin{proof}
Up to a scaling, we may assume without the loss of generality that for all $x\in X$ we have $\|x\|_X\leq\|Tx\|_Y\leq(1+\delta)\|x\|_X$. Consequently, $(1+\delta)^{-1}\cdot Tx\in B_Y$ for $x\in A$. Moreover, for distinct $x, y\in A$ we have
$$\left\Vert\frac{Tx}{1+\delta}-\frac{Ty}{1+\delta}\right\Vert_Y \geq \frac{1}{1+\delta}\cdot\|x-y\|_X\geq\frac{1+\e}{1+\delta}\geq 1.$$
Thus Lemma~\ref{lemma-ball-sphere} applied to the vectors $(1+\delta)^{-1}Tx$ and $(1+\delta)^{-1}Ty$ gives 
$$\left\Vert\frac{Tx}{\|Tx\|_Y}-\frac{Ty}{\|Ty\|_Y}\right\Vert_Y\geq
\left\Vert\frac{Tx}{1+\delta}-\frac{Ty}{1+\delta}\right\Vert_Y
\geq\frac{1+\e}{1+\delta}\geq 1+\e/2,$$
as $\delta\leq\frac{\e}{2+\e}$. The symmetric assertion is proved in the same way.
\end{proof}
Let us mention that in the above proof we made use of Lemma~\ref{lemma-ball-sphere} in order to obtain a~slightly better choice for $\delta$. If we only used the triangle inequality, we would have needed the condition $\delta\leq\frac{\e}{4}$.

\subsection{Boundedly complete basic sequences}
Here we collect a few remarks on boundedly complete basic sequences, that we shall use in the proof of Theorem~\ref{s-e.o. for bddly complete basic sequence}; for convenience of the reader, we start recalling the relevant definitions.\smallskip

A basic sequence $(e_j)_{j=1}^\infty$ in a Banach space $X$ is \emph{boundedly complete} if the series $\sum_{j=1}^\infty a^je_j$ converges in $X$ for every choice of the scalars $(a^j)_{j=1}^\infty$ such that
$$\sup_{k\in \mathbb N} \left\Vert \sum_{j=1}^k a^je_j \right\Vert <\infty.$$ 
It is very simple to verify that if $(e_j)_{j=1}^\infty$ is a boundedly complete basic sequence, then so is every block basic sequence of $(e_j)_{j=1}^\infty$.\smallskip

We shall require a small refinement of the classical Mazur technique of constructing basic sequences (see, \emph{e.g.}, \cite[Theorem~4.19]{FHHMZ} or \cite[Corollary~1.5.3]{ak}). In particular, we shall exploit along the way the following lemma due to Mazur (the formulation given here can be found, \emph{e.g.}, in \cite[Lemma~4.66]{HJ}). 
\begin{lemma}Let $E$ be a finite-dimensional subspace of a Banach space $X$ and $\e>0$. Then there exists a finite-codimensional subspace $F$ of $X$ such that for every $x\in E$ and $v\in F$
$$\|x\|\leq(1+\e)\|x+v\|.$$\end{lemma}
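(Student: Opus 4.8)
The plan is to prove a quantitative, localised version of Mazur's classical lemma, keeping track of the finite-codimensional subspace explicitly via a finite $\e$-net of the unit sphere of $E$ together with norming functionals. First I would fix a finite subset $\{x_1,\dots,x_n\}$ of $S_E$ that is an $\eta$-net for $S_E$, where $\eta>0$ is a small parameter depending on $\e$ to be optimised at the end; this is possible since $E$ is finite-dimensional and hence $S_E$ is compact. For each $i\in\{1,\dots,n\}$ pick, by the Hahn--Banach theorem, a norming functional $f_i\in S_{X^*}$ with $f_i(x_i)=1$. Then I set $F=\bigcap_{i=1}^n\ker f_i$, which is a finite-codimensional (closed) subspace of $X$.

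Next I would verify the inequality $\|x\|\leq(1+\e)\|x+v\|$ for $x\in E$ and $v\in F$. By homogeneity it suffices to treat $x\in S_E$. Choose $i$ with $\|x-x_i\|\leq\eta$. Then for any $v\in F$, using $f_i(v)=0$ and $f_i(x_i)=1$,
$$\|x+v\|\geq |f_i(x+v)| = |f_i(x)| \geq |f_i(x_i)| - |f_i(x-x_i)| \geq 1-\eta.$$
Hence $\|x\|=1\leq(1-\eta)^{-1}\|x+v\|$, and it remains only to choose $\eta$ small enough that $(1-\eta)^{-1}\leq 1+\e$, e.g. $\eta=\e/(1+\e)$. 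Scaling back to arbitrary $x\in E$ gives the statement.

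The only genuinely delicate point — and it is minor — is making sure the net/functional bookkeeping interacts correctly with the scaling: one wants the estimate uniformly over all $x\in E\setminus\{0\}$, which is immediate by positive homogeneity of the norm once it is established on $S_E$, and the case $x=0$ is trivial. No deeper machinery is needed; the argument is entirely elementary and the "obstacle" is merely choosing the parameter $\eta$ correctly at the end. I would also remark that $F$ can be taken to be the intersection of finitely many kernels of functionals, which is exactly the form in which the lemma is typically used when iterating Mazur's construction to produce basic sequences.
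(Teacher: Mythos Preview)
Your argument is correct and is precisely the classical proof of Mazur's lemma: an $\eta$-net in $S_E$, norming functionals $f_i$, and $F=\bigcap_i\ker f_i$; the estimate $\|x+v\|\geq 1-\eta$ for $x\in S_E$, $v\in F$ follows exactly as you wrote, and the choice $\eta=\e/(1+\e)$ gives $(1-\eta)^{-1}=1+\e$. The paper does not actually supply a proof of this lemma; it quotes it as a known result due to Mazur and refers to \cite[Lemma~4.66]{HJ} for this formulation, so there is nothing to compare your approach against beyond noting that what you have written is the standard textbook argument.
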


\begin{lemma}\label{lemma: basic sequence almost monotone} Let $X$ be an infinite-dimensional Banach space and let $(e_j)_{j=1}^\infty$ be a basic sequence in $X$. Suppose that $(\e_j)_{j=1}^\infty$ is a sequence of positive real numbers that converges to 0. Then there exists a block basic sequence $(x_j)_{j=1}^\infty$ of $(e_j)_{j=1}^\infty$, such that $$\|P_j\|\leq1+\e_j \quad (j\in \mathbb N),$$ where $P_j\colon\overline{\rm span}\{x_j\}_{j=1}^\infty\to\overline{\rm span}\{x_j\}_{j=1}^\infty$ denotes the j$^{th}$ canonical projection associated to the basic sequence $(x_j)_{j=1}^\infty$.\smallskip

In particular, if $(e_j)_{j=1}^\infty$ is boundedly complete, then so is $(x_j)_{j=1}^\infty$.
\end{lemma}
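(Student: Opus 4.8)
The plan is to produce the block basic sequence $(x_j)$ by an inductive construction that combines the Mazur technique (to control the projection constants) with the bounded completeness being inherited by block bases. First I would fix the target sequence $(\e_j)$ and, for bookkeeping, pass to a quickly summable auxiliary sequence $(\eta_j)$ of positive reals with, say, $\prod_{j}(1+\eta_j)$ and the tail products $\prod_{i\geq j}(1+\eta_i)$ dominated by $1+\e_j$; this reduces the problem to arranging that the partial-sum projections of $(x_j)$ have norms bounded by the corresponding tail products of $(1+\eta_i)$. The core of the argument is the standard ``successive Mazur'' scheme: having chosen a block $x_1,\dots,x_{j-1}$ supported on $\{e_1,\dots,e_{N_{j-1}}\}$, let $E_j=\operatorname{span}\{e_1,\dots,e_{N_{j-1}}\}$ (a finite-dimensional space) and apply the Mazur lemma above with $\e=\eta_j$ to obtain a finite-codimensional subspace $F_j\subseteq X$ with $\|u\|\leq(1+\eta_j)\|u+v\|$ for $u\in E_j$, $v\in F_j$. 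Since $F_j$ is finite-codimensional and $(e_j)$ is a basis of its closed span, some tail $\overline{\operatorname{span}}\{e_i\}_{i>N_{j-1}}$ meets $F_j$ in an infinite-dimensional subspace, so one can pick a nonzero finitely-supported block $x_j\in F_j$ supported on $\{e_{N_{j-1}+1},\dots,e_{N_j}\}$ for some $N_j>N_{j-1}$, normalised so that $\|x_j\|=1$.

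Next I would verify that the resulting $(x_j)$ is basic with the claimed projection bound. The key inequality is the one produced by Mazur's lemma: for any scalars and any $k\geq 1$, writing $u=\sum_{i=1}^{k}a_i x_i\in E_{k+1}$ and $v=\sum_{i>k}a_i x_i\in F_{k+1}$ (the latter because each $x_i$ with $i>k$ was chosen inside $F_i\subseteq F_{k+1}$, as the $F_j$ can be taken decreasing, or directly because $x_i\in F_{k+1}$ for $i>k$ by construction), one gets $\|\sum_{i=1}^{k}a_i x_i\|\leq(1+\eta_{k+1})\|\sum_{i=1}^{\infty}a_i x_i\|$. Iterating this between consecutive levels yields, for $m<n$, a bound of the form $\|\sum_{i=1}^{m}a_i x_i\|\leq\big(\prod_{i=m+1}^{n}(1+\eta_i)\big)\|\sum_{i=1}^{n}a_i x_i\|$, hence $(x_j)$ is basic and the partial-sum projection $S_m$ onto the first $m$ coordinates satisfies $\|S_m\|\leq\prod_{i>m}(1+\eta_i)\leq 1+\e_{m+1}\leq 1+\e_m$. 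Since the $j$th canonical projection is $P_j=S_j-S_{j-1}$, one then deduces $\|P_j\|\leq 1+\e_j$ after a harmless reindexing or a slight inflation of the chosen $(\eta_j)$ (alternatively one observes $\|P_j x\|\le \|S_j\|\,\|x\| + \|S_{j-1}\|\,\|x\|$ is too weak, so the cleaner route is to record that $\|P_j\|\le \|S_j\|$ does not hold in general and instead to target $\sup_m\|S_m\|\le 1+\e_1$ plus $\|S_m-S_{m-1}\|$ controlled directly — I would present it via the tail-product bound, which gives $\|P_j\|\le 1+\e_j$ directly once $(\eta_j)$ is chosen appropriately, e.g.\ with $\prod_{i\ge j}(1+\eta_i)^2\le 1+\e_j$).

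For the final ``in particular'' clause, I would invoke the remark recorded just before the lemma: bounded completeness passes to block basic sequences. Since $(x_j)$ is by construction a block basic sequence of $(e_j)$, if $(e_j)$ is boundedly complete then so is $(x_j)$, with no further work needed.

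The main obstacle I anticipate is purely organisational rather than conceptual: one must choose the auxiliary perturbations $(\eta_j)$ and arrange the telescoping of Mazur's estimates so that the \emph{canonical projections} $P_j$ (not merely the partial-sum projections $S_m$) end up bounded by $1+\e_j$, which forces a bit of care because $P_j=S_j-S_{j-1}$ and a crude triangle-inequality bound would give roughly $2$ rather than $1+\e_j$. The fix is to prove the sharper tail-product bound $\|S_m\|\le \prod_{i>m}(1+\eta_i)$ — which does yield $\|P_j\|\le \prod_{i\ge j}(1+\eta_i)$ via the same iteration applied to the ``middle block'' estimate — and then to have pre-selected $(\eta_j)$ so small that this product is at most $1+\e_j$. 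A secondary technical point is ensuring at each step that $F_j$ (finite-codimensional) genuinely contains a nonzero block of $(e_i)$ with arbitrarily large minimal support, which is immediate since an infinite-dimensional space cannot be covered by finitely many coordinate hyperplanes; I would state this as a one-line observation. Everything else is routine.
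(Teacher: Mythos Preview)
Your approach is essentially the same as the paper's: pick an auxiliary null sequence $(\eta_j)$ (the paper writes $(\delta_j)$) with tail products $\prod_{i\geq n}(1+\eta_i)\leq 1+\e_n$, run Mazur's lemma inductively to obtain a block basic sequence with $\bigl\|\sum_{i=1}^n a_ix_i\bigr\|\leq(1+\eta_n)\bigl\|\sum_{i=1}^{n+1}a_ix_i\bigr\|$, iterate, and appeal to the stability of bounded completeness under passing to block sequences for the last clause.

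The one real issue is terminological. In the standard usage for basic sequences---and in this paper---the ``$j$th canonical projection'' $P_j$ associated to $(x_i)$ is precisely the partial-sum projection you denote $S_j$, i.e.\ $P_j\bigl(\sum_i a_ix_i\bigr)=\sum_{i=1}^j a_ix_i$, \emph{not} the rank-one map $S_j-S_{j-1}$. So the bound $\|S_m\|\leq\prod_{i\geq m}(1+\eta_i)\leq 1+\e_m$ that you already derive \emph{is} the conclusion $\|P_m\|\leq 1+\e_m$, and your ``main obstacle'' evaporates: there is no need to control $S_j-S_{j-1}$, no need for the device $\prod_{i\geq j}(1+\eta_i)^2\leq 1+\e_j$, and no reindexing. (A small side remark: in your verification paragraph you assert $v=\sum_{i>k}a_ix_i\in F_{k+1}$, which as written needs the $F_j$ to be decreasing; the paper avoids this by only using the one-step inequality and then iterating, which is the cleaner route you also mention.)
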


\begin{proof} Fix a sequence $\delta_j\searrow 0$ such that $\prod_{j=n}^\infty(1+\delta_j)\leq 1+\e_n$ for every $n$. We start choosing a unit vector $x_1\in {\rm span}\{e_j\}_{j=1}^\infty$. We then find a finite-codimensional subspace $F_1$ of $X$, obtained applying Mazur's lemma to ${\rm span}\{x_1\}$ and $\delta_1$. For $n_1$ sufficiently large, we have $x_1\in{\rm span}\{e_j\}_{j=1}^{n_1-1}$; since $F_1$ is finite-codimensional, we can choose a unit vector $x_2$ in $F_1\cap{\rm span}\{e_j\}_{j= n_1}^\infty$. By Mazur's lemma, for all scalars $\alpha^1,\alpha^2$ such $x_2$ satisfies
$$\|\alpha^1x_1\|\leq (1+\delta_1)\|\alpha^1x_1+\alpha^2x_2\|.$$

We proceed analogously by induction: assume that we have already found a finite block sequence $(x_j)_{j=1}^n$ of $(e_j)_{j=1}^\infty$ such that
$$\left\Vert \sum_{j=1}^k\alpha^jx_j \right\Vert \leq (1+\delta_k) \left\Vert \sum_{j=1}^{k+1}\alpha^jx_j \right\Vert$$
for every $k=1,\dots,n-1$ and scalars $\alpha^1,\dots,\alpha^n$. Let $F_n$ be a finite-codimensional subspace of $X$ as in the conclusion of Mazur's lemma, applied to ${\rm span}\{x_1,\dots,x_n\}$ and $\delta_n$. Moreover let $N\in\N$ be so large that $x_1,\dots,x_n\in{\rm span}\{e_j\}_{j=1}^{N-1}$. We can then choose a unit vector $x_{n+1}$ in $F_n\cap {\rm span}\{e_j\}_{j= N}^\infty$ and such a choice ensures us that
\begin{equation} \label{asympt.monot.}
\left\Vert \sum_{j=1}^n\alpha^jx_j \right\Vert \leq (1+\delta_n) \left\Vert \sum_{j=1}^{n+1}\alpha^jx_j \right\Vert
\end{equation}
for every choice of the scalars $\alpha^1,\dots,\alpha^{n+1}$. This concludes the inductive procedure.

From (\ref{asympt.monot.}) it is clear that for every $n,k\in\N$
$$\left\Vert \sum_{j=1}^n\alpha^jx_j \right\Vert \leq \prod_{j=n}^\infty(1+\delta_j) \left\Vert \sum_{j=1}^{n+k}\alpha^jx_j \right\Vert \leq (1+\e_n)\left\Vert \sum_{j=1}^{n+k}\alpha^jx_j \right\Vert,$$
hence $\|P_n\|\leq1+\e_n$. It is also clear from the construction that $(x_j)_{j=1}^\infty$ is a block basic sequence of $(e_j)_{j=1}^\infty$. Finally, the last assertion of the lemma follows from the comments preceding its proof.
\end{proof}

\section{Proof of Theorem A}
\begin{proof}[Proof of Theorem A] Let $X$ be an infinite-dimensional Banach space. We consider the following property of an infinite-dimensional subspace $\tilde{X}$ of $X$: $\tilde{X}$ has property (\Square) if there exist a unit vector $x\in S_{\tilde{X}}$ and an infinite-dimensional subspace $Y$ of $\tilde{X}$ such that $\|x+y\|>1$ for every unit vector $y\in S_Y$. In symbols,
$$\tilde{X} \text{ has (\Square) if:}\quad \exists x\in S_{\tilde{X}}, \exists Y\subseteq \tilde{X} \text{ infinite-dimensional subspace: } \forall y\in S_Y \, \|x+y\|>1.$$
Then we have the following dichotomy: either every infinite-dimensional subspace of $X$ has (\Square) or some infinite-dimensional subspace has ($\neg$\,\Square).

\smallskip
The proof of the result in the first alternative of the dichotomy is very simple: in fact, the assumption that $X$ has (\Square) yields a unit vector $x_1\in X$ and an infinite-dimensional subspace $X_1$ of $X$ such that $\|x_1+y\|>1$ for every $y\in S_{X_1}$. Since $X_1$ has (\Square) too, we can find a unit vector $x_2$ in $X_1$ and an infinite-dimensional subspace $X_2$ of $X_1$ such that $\|x_2+y\|>1$ for every $y\in S_{X_2}$. We proceed by induction in the obvious way and we find a sequence $(x_n)_{n=1}^\infty$ of unit vectors in $X$ and a decreasing sequence $(X_n)_{n=1}^\infty$ of infinite-dimensional subspaces of $X$ such that, for every $n\in\N$
\begin{romanenumerate}
\item $x_{n+1}\in X_n$ and
\item $\|x_n + y\| >1$ for every $y\in S_{X_n}$. 
\end{romanenumerate}
The sequence $(x_n)_{n=1}^\infty\subseteq S_X$ is then the desired symmetrically $(1+)$-separated sequence since for $1\leq k<n$ we have $\pm x_n\in X_{n-1} \subseteq X_k$; hence $\|x_k \pm x_n\|>1$.

\smallskip
In the second alternative, there exists an infinite-dimensional subspace $\tilde{X}$ of $X$ with property ($\neg$\,\Square); since we shall construct the desired sequence in the subspace $\tilde{X}$, we can assume without loss of generality that $\tilde{X}=X$. We first note that the assumption $X$ to admit property ($\neg$\,\Square) is equivalent to the formally stronger property
$$(\text{\XBox})\quad \forall x\in B_X, \forall Y\subseteq X \text{ infinite-dimensional subspace }\exists y\in S_Y\colon \|x+y\|\leq 1.$$
In fact, for $x\in S_X$, (\XBox) is exactly the negation of (\Square), while for $x=0$ it is trivially true. Given a non-zero $x\in B_X$ and an infinite-dimensional subspace $Y$ of $X$, ($\neg$\,\Square) provides us with a vector $y\in S_Y$ with $\left\Vert \frac{x}{\|x\|}+y\right\Vert\leq1$; consequently $\|x+y\|\leq 1$, by Lemma~\ref{lemma-ball-sphere}.

\smallskip
We finally prove the result under the additional assumption that $X$ has property (\XBox). Fix a decreasing sequence $(\delta_n)_{n=1}^\infty$ of positive reals with $\sum_{n=1}^\infty \delta_n\leq 1/4$, say $\delta_n=2^{-(n+2)}$. Also, choose any $z\in X$ with $\|z\|=3/4$ and find a norming functional $\psi\in S_{X^*}$ for $z$. \smallskip

We now construct by induction two sequences $(y_n)_{n=1}^\infty$ in $S_X$ and $(\p_n)_{n=1}^\infty$ in $S_{X^*}$ such that:
\begin{romanenumerate}
\item $\langle\p_n,y_n\rangle=1$ ($n\in\N$);
\item $y_1\in\ker\psi$ and $y_{n+1}\in\ker\psi\cap\bigcap_{i=1}^n\ker\p_i$ ($n\in\N$);
\item $\|z+y_1\|\leq1$ and $\|z- \delta_1y_1 - {\dots} - \delta_ny_n + y_{n+1}\|\leq 1$ ($n\in\N$).
\end{romanenumerate}
In fact, by (\XBox), there exists a unit vector $y_1\in\ker\psi$ such that $\|z+y_1\|\leq1$; we also find a~norming functional $\p_1$ for $y_1$. Assume that we have already found $y_1,\dots,y_n$ and $\p_1,\dots,\p_n$ for some $n\geq1$. Of course, the triangle inequality and our choice of $(\delta_n)_{n=1}^\infty$ imply
$$\|z- \delta_1y_1 - {\dots} - \delta_ny_n\|\leq 1,$$
thus (\XBox) ensures us of the existence of a unit vector $y_{n+1}$ in $\ker\psi\cap\bigcap_{i=1}^n\ker\p_i$ such that
$$\|z- \delta_1y_1 - {\dots} - \delta_ny_n + y_{n+1}\|\leq 1.$$
To complete the induction step it is then sufficient to take a norming functional $\p_{n+1}$ for $y_{n+1}$.\smallskip

We now define $x_1:=z+y_1$ and $x_{n+1}:=z- \delta_1y_1 - {\dots} - \delta_ny_n + y_{n+1}$ ($n\in\N$). Fix two natural numbers $k<n$. By the very construction, each $y_i$ lies in $\ker\psi$, so we have
$$\|x_n+x_k\|\geq\langle\psi,x_n+x_k\rangle=\langle\psi,2z\rangle=2\|z\|>1.$$
Moreover, $y_i\in\ker\p_k$ for every $i>k$, whence
$$\|x_k-x_n\|\geq\langle\p_k,x_k-x_n\rangle=\langle\p_k,y_k+\delta_ky_k+{\dots}+\delta_{n-1}y_{n-1}-y_n\rangle = \langle\p_k,(1+\delta_k)y_k\rangle=1+\delta_k>1.$$
Consequently, $(x_n)_{n=1}^\infty$ is a symmetrically $(1+)$-separated sequence and the vectors $x_n$ are contained in $B_X$, due to (iii). It thus follows from Lemma~\ref{lemma-ball-sphere} that the unit sphere of $X$ contains a symmetrically $(1+)$-separated sequence.
\end{proof}

\section{Symmetric $(1+\e)$-separation}
\subsection{Proof of Theorem~\ref{s-e.o. for bddly complete basic sequence}.}
We are ready to enter the proof of the theorem and we start introducing a bit of terminology. Given a basic sequence $(e_j)_{j=1}^\infty$, by a \emph{block} we mean a vector in ${\rm span}\{e_j\}_{j=1}^\infty$, and we also say that a block is a \emph{finitely supported} vector. A \emph{unit block} is of course a block which is also a norm one vector. Two blocks $b_1,b_2$ are \emph{consecutive} if $b_1\in {\rm span}\{e_j\}_{j=1}^N$ and $b_2\in {\rm span}\{e_j\}_{j=N+1}^\infty$; in this case we write $b_1 < b_2$. We also write $N<b$, where $N\in\N$, if $b\in{\rm span}\{e_j\}_{j=N+1}^\infty$, namely `the support of $b$ begins after $N$' (and analogously for $N\leq b$, $b<N$ or $b\leq N$). An \emph{extension} of a finite set of blocks $b_1<b_2<\dots<b_n$ is the choice of a block $b$ with $b_n < b$.

\begin{proof}[Proof of Theorem~\ref{s-e.o. for bddly complete basic sequence}.]Fix a boundedly complete basic sequence $(e_j)_{j=1}^\infty$ in $X$ and a decreasing sequence $(\e_j)_{j=1}^\infty$ of numbers in the interval $(0,1)$ with $\sum_{j=1}^\infty\e_j<\infty$. According to Lemma~\ref{lemma: basic sequence almost monotone} we can assume that the canonical projections $(P_j)_{j=1}^\infty$ associated to $(e_j)_{j=1}^\infty$ satisfy $\|P_j\|\leq1+\e_j$. We are going to construct the desired symmetrically separated sequence as a block basic sequence of $(e_j)_{j=1}^\infty$, so we can safely assume without loss of generality that $X={\rm span}\{e_j\}_{j=1}^\infty$. In other words, our actual assumptions are that $X$ admits a boundedly complete Schauder basis $(e_j)_{j=1}^\infty$, whose associated canonical projections satisfy $\|P_j\|\leq1+\e_j$.

\smallskip
We now begin with the construction. Either every symmetrically $(1+\e_1)$-separated finite family of unit blocks $b_1<b_2<\dots<b_n$ admits a symmetrically $(1+\e_1)$-separated extension $b_1<b_2<\dots<b_n<b$, with $b$ a unit block, or there exists a symmetrically $(1+\e_1)$-separated finite family of unit blocks $b_1<b_2<\dots<b_n$ that admits no such extension. In the first case we start with a family with cardinality $1$ and we can easily produce by induction a symmetrically $(1+\e_1)$-separated sequence $b_1 < b_2 <\dots< b_n < \dots$ consisting of unit blocks. In this case the proof is complete. 
Alternatively, we have found a finite family of unit blocks $\mathcal{B}_1:=(b_i^{(1)})_{i=1}^{N_1}$ which is symmetrically $(1+\e_1)$-separated and admits no extension with the same property. In other words, the family $\mathcal{B}_1$ satisfies: 
$$b_1^{(1)}<b_2^{(1)}<\dots<b_{N_1}^{(1)}, \; \|b_i^{(1)}\|=1, \;\|b_i^{(1)} \pm b_j^{(1)}\| \geq 1+\e_1\quad (i, j\in\{1,\dots,N_1\}, i\neq j)$$
and for every unit block $b > b_{N_1}^{(1)}$ there are $i=1,\dots,N_1$ and $\sigma=\pm1$ with $\|\sigma b_i^{(1)}+b\| < 1+\e_1$.\smallskip 

We next repeat the same alternative, but now we are in search of symmetrically $(1+\e_2)$-separated families of unit blocks and we only look for blocks $b>b_{N_1}^{(1)}$. Hence, either every symmetrically $(1+\e_2)$-separated finite family of unit blocks $b_1<b_2<\dots<b_n$ with $b_{N_1}^{(1)}<b_1$ admits a symmetrically $(1+\e_2)$-separated extension $b_1<b_2<\dots<b_n<b$, with $b$ a unit block, or there exists a symmetrically $(1+\e_2)$-separated finite family of unit blocks $b_1<b_2<\dots<b_n$ that admits no such extension. In the first case, the proof is completed by the simple induction argument, while in the second one we have obtained a~family $\mathcal{B}_2:=(b_i^{(2)})_{i=1}^{N_2}$ such that 
$$b_{N_1}^{(1)}<b_1^{(2)}<b_2^{(2)}<\dots<b_{N_2}^{(2)}, \, \|b_i^{(2)}\|=1\, \|b_i^{(2)} \pm b_j^{(2)}\| \geq 1+\e_2\quad (i, j\in\{1,\dots,N_2\}, i\neq j)$$
and for every unit block $b > b_{N_2}^{(2)}$ there are $i=1,\dots,N_2$ and $\sigma=\pm1$ with $\|\sigma b_i^{(2)}+b\| < 1+\e_2$. \smallskip

We proceed by induction in the obvious way: if at some step, say step $n$, we fall in the first of the two alternatives, then we easily conclude the existence of a symmetrically $(1+\e_n)$-separated sequence of unit vectors. In this case the proof is concluded and, of course, we stop our construction. In the other case, we tenaciously proceed for every $n$ and we consequently find families $\mathcal{B}_n:=(b_i^{(n)})_{i=1}^{N_n}$ such that for every $n\in\N$:
\begin{romanenumerate}
\item $\|b_i^{(n)}\|=1$ ($i=1,\dots,N_n$);
\item $b_1^{(n)}<b_2^{(n)}<\dots<b_{N_n}^{(n)}<b_1^{(n+1)}$;
\item $\|b_i^{(n)} \pm b_j^{(n)}\| \geq 1+\e_n$ ($i, j\in\{1,\dots,N_n\}, i\neq j$);
\item for any unit block $b > b_{N_n}^{(n)}$ there are $i=1,\dots,N_n$ and $\sigma=\pm1$ with $\|\sigma b_i^{(n)}+b\| < 1+\e_n$.
\end{romanenumerate}

\smallskip
Our plan now is to show that the existence of such families $(\mathcal{B}_n)_{n=1}^\infty$ is in contradiction with the assumption that $(e_j)_{j=1}^\infty$ is a boundedly complete Schauder basis. This implies that at some step we actually fall in the first alternative, and in turn this is sufficient to conclude the proof. The basic idea we exploit to implement our plan is to use elements of $\mathcal{B}_{n+1}$ to witness the non-extendability of $\mathcal{B}_n$. We will also use the following obvious inequality\footnote{We write it explicitly here only because it will be used with some slightly complicated expressions and it would be unpleasant to derive it every time with those expressions.}: if $a,b$ are vectors in a normed space $X$ and $1-\e\leq\|b\|\leq1+\e$, then
\begin{equation} \label{stupid bound}
\|a+b\| \leq \left\Vert a+\frac{b}{\|b\|}\right\Vert+\e.
\end{equation}

\smallskip
Fix any natural number $k\geq2$ and choose arbitrarily one index $n_k(k)\in\{1,\dots,N_k\}$; by condition (iv) there exist an index $n_{k-1}(k)\in\{1,\dots,N_{k-1}\}$ and a sign $\sigma_{k-1}(k)=\pm 1$ such that 
$$\left\Vert \sigma_{k-1}(k)b_{n_{k-1}(k)}^{(k-1)} + b_{n_k(k)}^{(k)}\right\Vert<1+\e_{k-1}.$$
Moreover, we can find an index $n$ with $b_{n_{k-1}(k)}^{(k-1)} \leq n< b_{n_k(k)}^{(k)}$ and clearly such $n$ satisfies $n\geq k-1$. Hence
$$1=\left\Vert b_{n_{k-1}(k)}^{(k-1)}\right\Vert=
\left\Vert P_n\left(\sigma_{k-1}(k)b_{n_{k-1}(k)}^{(k-1)} + b_{n_k(k)}^{(k)}\right) \right\Vert \leq (1+\e_{k-1})\left\Vert \sigma_{k-1}(k)b_{n_{k-1}(k)}^{(k-1)} + b_{n_k(k)}^{(k)}\right\Vert.$$
Consequently,
$$1-\e_{k-1}\leq\left\Vert\sigma_{k-1}(k)b_{n_{k-1}(k)}^{(k-1)} + b_{n_k(k)}^{(k)}\right\Vert<1+\e_{k-1}.$$
The vector
$$b:=\frac{\sigma_{k-1}(k)b_{n_{k-1}(k)}^{(k-1)} + b_{n_k(k)}^{(k)}}{\left\Vert\sigma_{k-1}(k)b_{n_{k-1}(k)}^{(k-1)} + b_{n_k(k)}^{(k)}\right\Vert}$$
is, of course, a unit block with $b>b_{N_{k-2}}^{(k-2)}$ and we can now use it to witness the maximality of $\mathcal{B}_{k-2}$. By condition (iv), there must exist an index $n_{k-2}(k)\in\{1,\dots,N_{k-2}\}$ and a sign $\sigma_{k-2}(k)=\pm 1$ such that $$\left\Vert \sigma_{k-2}(k)b_{n_{k-2}(k)}^{(k-2)} + b\right\Vert<1+\e_{k-2}.$$ By the inequality (\ref{stupid bound}) it then follows
$$1-\e_{k-2}\leq\left\Vert\sigma_{k-2}(k)b_{n_{k-2}(k)}^{(k-2)}+\sigma_{k-1}(k) 
b_{n_{k-1}(k)}^{(k-1)} + b_{n_k(k)}^{(k)}\right\Vert<1+\e_{k-2}+\e_{k-1}.$$
(where the lower bound is obtained applying a suitable projection $P_n$, as we have already done above). We proceed by going backwards in a similar way: the normalisation of the vector $\sigma_{k-2}(k)b_{n_{k-2}(k)}^{(k-2)}+\sigma_{k-1}(k)b_{n_{k-1}(k)}^{(k-1)} + b_{n_k(k)}^{(k)}$ is a unit block, which we use to witness the maximality of $\mathcal{B}_{k-3}$, and so on. In particular, we have proved the existence of a string of indices and signs $\{n_1(k),\sigma_1(k),\dots,n_k(k),\sigma_k(k)\},$ where $\sigma_k(k)=+1$, such that
$$\left\Vert\sigma_1(k)b_{n_1(k)}^{(1)}+\dots+
\sigma_k(k)b_{n_k(k)}^{(k)}\right\Vert<1+\e_1+\dots+\e_{k-1}.$$
If we apply again a suitable projection $P_n$, we also deduce the following stronger assertion: for every $k\in\N$ there exists a string of indices and signs $I_k=\{n_i(k),\sigma_i(k)\}_{i=1}^k$, where $\sigma_i(k)=\pm 1$ and $n_i(k)\in\{1,\dots,N_i\}$ for $i=1,\dots,k$, such that for every $\ell\in\N$, $\ell\leq k$ we have

\begin{equation} \label{bdd sums of blocks}
\left\Vert \sum_{i=1}^\ell \sigma_{i}(k)b_{n_{i}(k)}^{(i)}
\right\Vert\leq (1+\e_1)\cdot\left(1+\sum_{j=1}^\infty\e_j\right)=:C<\infty.
\end{equation}

Of course, there are only finitely many possibilities for the first two items of the strings $(I_k)_{k=1}^\infty$, so by the pigeonhole principle we may find an index $n_1\in\{1,\dots,N_1\}$ and a~sign $\sigma_1=\pm 1$ such that infinitely many strings begin with the pattern $\{n_1,\sigma_1\}$. Analogously, there are $n_2,\sigma_2$ such that an infinite subset of those strings begins with the pattern $\{n_1,\sigma_1,n_2,\sigma_2\}$. Continuing recursively, we find an infinite string $\{n_i,\sigma_i\}_{i=1}^\infty$ such that every its initial substring $\{n_i,\sigma_i\}_{i=1}^\ell$ is the initial part of infinitely many $I_k$'s. In particular, equation (\ref{bdd sums of blocks}) then implies that for every $\ell\in\N$
$$\left\Vert \sum_{i=1}^\ell \sigma_{i}b_{n_{i}}^{(i)}\right\Vert \leq C.$$

Finally, if we set $b_i:=\sigma_{i}b_{n_{i}}^{(i)}$, the sequence $(b_i)_{i=1}^\infty$ is a block basic sequence of $(e_j)_{j=1}^\infty$, hence it is boundedly complete. Moreover, the last inequality now reads
$$\sup_\ell \left\Vert\sum_{i=1}^lb_i\right\Vert\leq C.$$
It follows that the series $\sum_{i=1}^\infty b_i$ converges in $X$, which is a blatant contradiction with the fact that the $b_i$'s are unit vectors.
\end{proof}

\subsection{Quantitative results}
In this part we discuss some further results, in which it is possible to provide explicit estimates on the symmetric separation constant. In order to shorten the statements, we shall use the following \emph{symmetric Kottman constant}, introduced in \cite{CaPa}:
$$K^s(X):=\sup\Big\{\sigma>0\colon \exists(x_n)_{n=1}^\infty\subset B_X \,  \forall n\neq k\; \|x_n\pm x_k\|\geq\sigma \Big\}.$$

Let us start, for the sake of completeness, restating a few results already present in the literature concerning this constant. The first claim is probably a well known folklore fact, but we were not able to find it explicitly stated in the literature: if a Banach space $X$ admits a spreading model isomorphic to $\ell_1$, then $K^s(X)=2$. We shall say more on this in subsection \ref{Problem MaPa}, where we  will in particular briefly recall the notion of a spreading model and give a proof of such result.\smallskip

As we have already hinted at in the introduction, Castillo and Papini \cite[Proposition~3.4]{CaPa} proved that if $X$ is a $\mathscr{L}_\infty$-space, then $K^s(X)=2$. Delpech \cite{delpech} proved that every asymptotically uniformly convex Banach space $X$ satisfies $K^s(X)\geq1+\overline{\delta}_X(1)$, where $\overline{\delta}_X$ is the modulus of asymptotic uniform convexity (as we already mentioned, the symmetry assertion is not contained in the statement, but follows immediately from inspection of the proof). Prus \cite[Corollary~5]{prus} proved, among other things, that if $X$ has cotype $q<\infty$, then $K(X)\geq2^{1/q}$; it is not apparent from the argument whether it should also follow that $K^s(X)\geq2^{1/q}$. Therefore, we offer an alternative proof for this fact; our argument is based on an idea from \cite{KaKo}.\smallskip

A normalised basic sequence $(x_n)_{n=1}^\infty$ \emph{satisfies a lower $q$-estimate} if there is a constant $c>0$ such that
$$
c\cdot\left(\sum_{i=n}^N\abs{a_n}^q\right)^{1/q} \leq \left\|\sum_{n=1}^N a_nx_n\right\|
$$
for every choice of scalars $(a_n)_{n=1}^N$ and every $N \in \N$.\smallskip

Let $X$ be a Banach space with a basis $(x_n)_{n=1}^\infty$. Denote $X_n:= \overline{{\rm span}}\{x_i\}_{i= n}^\infty$ ($n\in \mathbb N$). We say that an operator $T\colon X\rightarrow Y$ is \emph{bounded by a pair $(\gamma,\varrho)$}, where $0<\gamma \leq \varrho < \infty$, if $\|T\|\leq\varrho$ and $\|T\vert_{X_n}\|\geq \gamma$ for every $n\in\N$.
\begin{proposition}\label{lowerq}Let $X$ be a Banach space that contains a normalised basic sequence satisfying a lower $q$-estimate. Then $K^s(X)\geq2^{1/q}$.
\end{proposition}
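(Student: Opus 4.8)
The plan is to produce the symmetrically separated sequence inside $Y:=\overline{\mathrm{span}}\{x_j\}_{j=1}^\infty$ as a normalised block basic sequence of the given basic sequence $(x_j)_{j=1}^\infty$; since $B_Y\subseteq B_X$ this is enough, so we may assume $X=Y$, i.e.\ that $(x_j)_{j=1}^\infty$ is a normalised Schauder basis of $X$ satisfying a lower $q$-estimate with some constant $c\in(0,1]$. For a finitely supported vector $b=\sum_j a_jx_j$ write $\abs{b}_q:=\big(\sum_j\abs{a_j}^q\big)^{1/q}$ for the $\ell_q$-norm of its coefficient vector, so that the lower $q$-estimate reads $\|b\|\geq c\abs{b}_q$. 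The key point — this is the idea from \cite{KaKo} alluded to above — is that the precise value of $c$ is irrelevant: we shall pick \emph{unit} blocks $b$ on which the lower $q$-estimate is \emph{nearly an equality}, and for two disjointly supported such blocks the unknown constant cancels against this near-optimality and leaves the bound $2^{1/q}$.

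To make the near-optimality uniform, for $N\in\N\cup\{0\}$ put
$$c_N:=\inf\Big\{\tfrac{\|b\|}{\abs{b}_q}\colon 0\neq b\ \text{finitely supported},\ \mathrm{supp}\,b\subseteq\{N+1,N+2,\dots\}\Big\}.$$
Then $c\leq c_N\leq\|x_{N+1}\|/\abs{x_{N+1}}_q=1$, and $(c_N)_N$ is non-decreasing (the infimum is taken over an ever smaller family of blocks), so $c_\infty:=\sup_N c_N$ exists and lies in $[c,1]$. Fix $\e>0$ and, using $c_N\to c_\infty$, choose $M$ with $c_M>c_\infty-\e$; then $c_\infty-\e<c_M\leq c_N\leq c_\infty$ for every $N\geq M$. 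Next I would select, by a gliding-hump induction, unit blocks $b_1<b_2<\dots$ with $\mathrm{supp}\,b_1\subseteq\{M+1,M+2,\dots\}$ and $\abs{b_i}_q>\tfrac{1}{c_\infty+\e}$ for all $i$: if $b_1<\dots<b_{i-1}$ have been chosen with supports inside $\{M+1,\dots,N_{i-1}\}$ (with $N_0:=M$), then since $c_{N_{i-1}}\leq c_\infty$ the definition of the infimum supplies a finitely supported $b$ with $\mathrm{supp}\,b\subseteq\{N_{i-1}+1,N_{i-1}+2,\dots\}$ and $\|b\|/\abs{b}_q<c_\infty+\e$; set $b_i:=b/\|b\|$ and $N_i:=\max\mathrm{supp}\,b_i$.

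The sequence $(b_i)_{i=1}^\infty$ lies in $S_X$. For $i\neq k$ the vector $b_i\pm b_k$ is finitely supported with support inside $\{M+1,M+2,\dots\}$, and since $b_i$ and $b_k$ have disjoint supports one has $\abs{b_i\pm b_k}_q=\big(\abs{b_i}_q^q+\abs{b_k}_q^q\big)^{1/q}$; hence
$$\|b_i\pm b_k\|\ \geq\ c_M\abs{b_i\pm b_k}_q\ =\ c_M\big(\abs{b_i}_q^q+\abs{b_k}_q^q\big)^{1/q}\ >\ (c_\infty-\e)\cdot\frac{2^{1/q}}{c_\infty+\e}.$$
Because $c_\infty>0$, the factor $(c_\infty-\e)/(c_\infty+\e)$ tends to $1$ as $\e\downarrow0$; so for any prescribed $\e'>0$, choosing $\e$ small enough produces a symmetrically $(2^{1/q}-\e')$-separated sequence in $S_X$, whence $K^s(X)\geq 2^{1/q}-\e'$. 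Letting $\e'\downarrow0$ gives $K^s(X)\geq 2^{1/q}$.

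The step I expect to require the most care is precisely the uniformity encoded in the choice of $M$. Passing to blocks further along the basis only \emph{raises} the optimal constant $c_N$; if one did not first move to a tail on which $c_N$ has essentially stabilised, then the estimate constant used for $\|b_i\pm b_k\|$ would be $c_M$ while the near-optimal mass of an individual block $b_i$ chosen much later would satisfy $\abs{b_i}_q\approx 1/c_{N_{i-1}}$ with $c_{N_{i-1}}>c_M$, and the two constants would fail to cancel. Everything else — the reduction to a basis, the gliding-hump selection, and the additivity $\abs{b_i\pm b_k}_q^q=\abs{b_i}_q^q+\abs{b_k}_q^q$ for disjointly supported blocks — is routine bookkeeping.
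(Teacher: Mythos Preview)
Your proof is correct and is essentially the same argument as the paper's, merely phrased in different language: the paper packages the lower $q$-estimate as a bounded operator $T\colon X\to\ell_q$, $Tx_n=e_n$, and works with the tail norms $\varrho_N=\|T|_{X_N}\|$, whereas you work directly with the ratios $\|b\|/\abs{b}_q$ and the tail infima $c_N$; these are reciprocals of one another, and your stabilisation step (choosing $M$ with $c_M>c_\infty-\e$) is exactly the paper's passage to a tail $X_k$ on which $T$ is bounded by a pair $(\gamma,\varrho)$ with $\gamma/\varrho$ close to~$1$. The gliding-hump selection and the disjoint-support $\ell_q$-additivity are identical in both proofs.
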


\begin{proof}Let $(x_n)_{n=1}^\infty$ be a normalised basic sequence with a lower $q$-estimate. We are going to construct the separated sequence as a block sequence of the basic sequence, so we can assume without loss of generality that $X=\overline{{\rm span}}\{x_i\}_{i=1}^\infty$. Then the assignment $Tx_n:=e_n$ ($n\in \mathbb N$) defines an injective, bounded linear operator $T\colon X\rightarrow\ell_q$.\smallskip

Set $\varrho_n=\|T\vert_{X_n}\|$ ($n\in \mathbb N$). Clearly, $(\varrho_n)_{n=1}^\infty$ is a decreasing sequence with $\varrho_n \geq1$ for every $n\in\N$. Moreover, $T\vert_{X_k}$ ($k\in \mathbb N$) is bounded by the pair $(\inf_{n\geq 1}\varrho_n,\varrho_k)$ and, of course, $\varrho_k\rightarrow \inf_{n\geq 1}\varrho_n$ as $k\rightarrow\infty$. In other words, up to replacing $X$ with $X_k$, for $k$ sufficiently large, we can (and do) assume that $T\colon X\rightarrow\ell_q$ is bounded by a pair $(\gamma,\varrho)$ with $\frac{\gamma}{\varrho}$ as close to $1$ as we wish (of course with $\frac{\gamma}{\varrho}<1$).\smallskip

Armed with this further information, we may now conclude the proof: let $\tilde{\gamma} < \gamma$ be such that $\frac{\tilde{\gamma}}{\varrho}$ is still as close to $1$ as we wish. Since $\|T\| >\tilde{\gamma}$, we can find a unit vector $y_1$ in ${\rm span}\{x_i\}_{i=1}^\infty$ 
such that $\|Ty_1\| > \tilde{\gamma}$. Assume now that we have already found unit vectors $y_1,\dots,y_n$ in ${\rm span}\{x_i\}_{i=1}^\infty$ such that $\|Ty_k\| > \tilde{\gamma}$ and the $Ty_k$ have mutually disjoint supports. Then there is $N$ such that $y_1,\dots,y_n \in {\rm span}\{x_i\}_{i=1}^N$ and the fact that $\|T\vert_{X_{N+1}}\| > \tilde{\gamma}$ allows us to find a unit vector $y_{n+1} \in {\rm span}\{x_i\}_{i=N+1}^\infty$ such that $\|Ty_{n+1}\| > \tilde{\gamma}$.\smallskip

Consequently, we have found a sequence $(y_n)_{n=1}^\infty$ in $S_X$ such that $\|Ty_n\| > \tilde{\gamma}$ and the supports of $Ty_n$ are finite and mutually disjoint. Hence for $n\neq k$ we have
$$\varrho \cdot \|y_n \pm y_k\| \geq \| Ty_n \pm Ty_k\| = \left( \|Ty_n\|^q + \|Ty_k\|^q \right)^{1/q} \geq \tilde{\gamma} \cdot 2^{1/q}.
$$
So 
$$K^s(X) \geq \frac{\tilde{\gamma}}{\varrho}\cdot 2^{1/q}
$$
and, since $\frac{\tilde{\gamma}}{\varrho}$ could be chosen to be as close to $1$ as we wish, the proof is complete.
\end{proof}

Recall that for a Banach space $X$ one sets 
$$q_X:=\inf \big\{q\in [2,\infty]: X\text{ has cotype }q\big\}.$$

\begin{corollary}Let $X$ be an infinite-dimensional Banach space. Then $K^s(X)\geq2^{1/q_X}$.
\end{corollary}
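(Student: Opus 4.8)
The plan is to adapt the mechanism of Proposition~\ref{lowerq} together with a spreading–model argument: for every $q>q_X$ I will exhibit, in the unit sphere of $X$, symmetrically $(2^{1/q}-\varepsilon)$-separated sequences for arbitrarily small $\varepsilon>0$, so that $K^s(X)\geq 2^{1/q}$, and then let $q\downarrow q_X$; this is legitimate since $q\mapsto 2^{1/q}$ is continuous and $X$ has cotype $q$ for every $q\in(q_X,\infty)$. The degenerate case $q_X=\infty$ requires nothing beyond recording that $2^{1/q_X}=1\leq K^s(X)$, the latter being immediate from Theorem~\ref{symKottman}, as a symmetrically $(1+)$-separated sequence is a fortiori $1$-separated. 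So fix $q\in(q_X,\infty)$; then $X$ has cotype $q$, say with constant $C$.

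I would first dispose of the case $\ell_1\hookrightarrow X$: there $K^s(X)=2\geq 2^{1/q}$ by the James distortion theorem together with Lemma~\ref{obvious} (equivalently, by the folklore fact recalled above that an $\ell_1$ spreading model forces $K^s(X)=2$). In the complementary case $X$ contains no copy of $\ell_1$, so — starting from any normalised basic sequence, invoking Rosenthal's $\ell_1$-theorem to pass to a weakly Cauchy subsequence, and then taking differences — $X$ contains a normalised weakly null basic sequence $(x_n)_{n=1}^\infty$. By the Brunel–Sucheston construction I may pass to a further subsequence generating a spreading model $(\tilde e_n)_{n=1}^\infty$ spanning an auxiliary space $E$. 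Since the generating sequence is weakly null, $(\tilde e_n)$ is $1$-suppression-unconditional, hence $\lambda$-unconditional with $\lambda\leq 2$ (a larger absolute constant suffices in the complex case), and it is normalised; and since $E$ is finitely representable in $X$, it has cotype $q$ with the same constant $C$. Feeding the vectors $(a_i\tilde e_i)_{i=1}^N$ into the cotype inequality and using $\lambda$-unconditionality and $\|\tilde e_i\|=1$ gives
\[
\Big(\sum_{i=1}^N|a_i|^q\Big)^{1/q}\leq C\Big(\mathrm{Ave}_{\pm}\Big\|\sum_{i=1}^N\pm a_i\tilde e_i\Big\|^2\Big)^{1/2}\leq \lambda C\Big\|\sum_{i=1}^N a_i\tilde e_i\Big\|,
\]
i.e. $(\tilde e_n)$ satisfies a lower $q$-estimate in $E$.

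The final — and most delicate — step is to push a separated sequence from $E$ back into $X$. The operator $T\colon E\to\ell_q$, $T\tilde e_n=e_n$, is bounded precisely because of the lower $q$-estimate, with norm $\varrho:=\|T\|\in[1,\infty)$; and since $(\tilde e_n)$ is $1$-spreading one has $\|T|_{E_n}\|=\varrho$ for every $n$. Given $\varepsilon>0$ I would pick a \emph{finitely supported} unit block $y=\sum_{i=1}^\ell\beta_i\tilde e_i$ with $\|\beta\|_q/\varrho>1-\varepsilon$ and take its consecutive translates $y^{(1)}<y^{(2)}<\dots$ (available since $E$ is spreading); as $Ty^{(m)}$ and $Ty^{(k)}$ are disjointly supported in $\ell_q$,
\[
\big\|y^{(m)}\pm y^{(k)}\big\|_E\geq\frac1\varrho\big\|Ty^{(m)}\pm Ty^{(k)}\big\|_q=\frac{2^{1/q}\|\beta\|_q}{\varrho}>2^{1/q}(1-\varepsilon).
\]
Crucially, every combination $y^{(m)}\pm y^{(k)}$ involves only one of finitely many scalar patterns, all of the fixed length $2\ell$; hence a single threshold $M\in\N$ — furnished by the uniformity of spreading-model convergence over normalised configurations of length $2\ell$ — works for all of them: choosing indices $M\leq k^{(1)}_1<\dots<k^{(1)}_\ell<k^{(2)}_1<\dots$ in $\N$ and putting $z^{(m)}:=\sum_{i=1}^\ell\beta_i x_{k^{(m)}_i}$, one gets $\|z^{(m)}\|$ and $\|z^{(m)}\pm z^{(k)}\|$ within $\varepsilon$ of $1$ and of $\|y^{(m)}\pm y^{(k)}\|_E$ respectively, so that after normalising them — which by Lemma~\ref{lemma-ball-sphere} does not decrease their mutual (symmetric) separation — the vectors $z^{(m)}$ yield a symmetrically $(2^{1/q}-\varepsilon')$-separated sequence in $S_X$ with $\varepsilon'\to 0$ as $\varepsilon\to 0$. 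Therefore $K^s(X)\geq 2^{1/q}$, and letting $q\downarrow q_X$ finishes the argument.

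The main obstacle is exactly this transfer: the lower $q$-estimate is obtained in the auxiliary spreading-model space $E$, which need not embed into $X$, and the spreading-model approximation controls only norms of combinations whose supporting indices are all large, whereas any sequence in $X$ must use some low indices near its start. The device that circumvents this is to take the separated sequence in $E$ to consist of disjoint translates of a \emph{single} finitely supported block, so that only finitely many scalar patterns — all of one bounded length — ever occur among the relevant combinations; one spreading-model estimate then applies uniformly, and the low-index initial terms cause no trouble.
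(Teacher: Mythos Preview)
Your argument is correct, but it takes a genuinely different route from the paper's. The paper's proof is very short: after the trivial case $q_X=\infty$ (handled by Riesz' lemma) and the Schur case (which forces $\ell_1\hookrightarrow X$ and hence $K^s(X)=2$), it simply cites a known fact---that any weakly null normalised basic sequence in a space of finite cotype admits, for every $r>q_X$, a subsequence with a lower $r$-estimate---and then applies Proposition~\ref{lowerq} \emph{inside $X$}. You instead pass to a spreading model $E$, derive the lower $q$-estimate there from $1$-suppression unconditionality together with the cotype inherited by finite representability, locate the separated sequence in $E$, and then push it back into $X$ by exploiting that your translates of a single block generate only finitely many scalar patterns of fixed length, so a single spreading-model threshold controls all the relevant norms. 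The paper's approach is shorter but outsources the key lower-estimate lemma; your approach is more self-contained and effectively reproves that lemma via spreading models, at the price of the additional transfer step.

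One small point worth tightening: when you invoke Lemma~\ref{lemma-ball-sphere} to say that normalising the $z^{(m)}$ does not decrease their symmetric separation, that lemma requires the vectors to lie in the unit ball and to be at distance at least~$1$. Since $\|z^{(m)}\|$ is only within $\varepsilon$ of $1$, you should first scale by $(1+\varepsilon)^{-1}$ (exactly as in the proof of Lemma~\ref{obvious}) before applying the lemma; this is harmless but should be said explicitly.
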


\begin{proof} If $q_X=\infty$, then the assertion follows immediately from the Riesz lemma, so we assume $q_X<\infty$. If $X$ is a Schur space, then by Rosental's $\ell_1$-theorem $X$ contains a copy of $\ell_1$ and the James' non-distortion theorem even implies $K^s(X)=2$. In the other case, there is a weakly null normalised basic sequence in $X$; it is known (see, \emph{e.g.}, \cite[Definition~3.54 and Proposition~4.36]{HJ}) that for every $r>q_X$ such a sequence admits a subsequence with a~lower $r$-estimate, so the result follows from the previous proposition.
\end{proof}

\section{Concluding remarks}
\subsection{Results under renorming}
In this short part, we observe that the problem of finding symmetrically $(1+\e)$-separated sequences of unit vectors is much easier if we allow renormings of the spaces under investigation. 

\begin{proposition}\label{renorm with 2-separation}Let $(X,\norm)$ be an infinite-dimensional Banach space. Then $X$ admits an equivalent norm $\vertiii{\cdot}$ such that $S_{(X,\vertiii{\cdot})}$ contains a symmetrically $2$-separated sequence.
\end{proposition}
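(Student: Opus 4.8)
The plan is to build the renorming by hand, starting from \emph{any} normalised basic sequence in $X$ (which exists by the classical Mazur construction) and making the standard basis of its span become symmetrically $2$-separated in the new norm, while controlling the norm on a complementary piece so that the global norm is equivalent to the original one. Concretely, pick a normalised basic sequence $(e_j)_{j=1}^\infty$ in $X$ with basis constant, say, at most $2$, and let $E=\overline{\operatorname{span}}\{e_j\}_{j=1}^\infty$. On $E$ I would like a norm in which $\|e_i\pm e_j\|=2$ for $i\neq j$; the natural candidate is to make $(e_j)$ behave like the summing-type basis of $c_0$ used in the introduction, or simply to take on $E$ an equivalent norm for which $(e_j)$ is isometrically equivalent to the $c_0$-basis after a change of basis — recall from the introduction that in $c_0$ the vectors $x_n=-e_{n+1}+\sum_{k=1}^n e_k$ form a symmetrically $2$-separated sequence in $S_{c_0}$. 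So the first step is: renorm $E$ so that it contains an isometric copy of a two-dimensional-section-bounded sequence that is symmetrically $2$-separated on its sphere.

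The second step is to extend this renorming of $E$ to all of $X$. Since $E$ is a closed subspace, I would fix a norming set or, more robustly, use the fact that any equivalent norm on a closed subspace extends to an equivalent norm on the whole space (for instance, if $\vertiii{\cdot}_E$ is equivalent to $\|\cdot\|$ on $E$, one can define $\vertiii{x}:=\inf\{\vertiii{y}_E+\|x-y\|: y\in E\}$, the "biggest" equivalent norm agreeing with $\|\cdot\|$ off $E$; or use the Tietze-type extension for convex functions). One then checks that the restriction of the extended norm to $E$ still coincides with (or is pinched between constant multiples of) $\vertiii{\cdot}_E$, so that the symmetrically $2$-separated sequence we built in $(E,\vertiii{\cdot}_E)$ remains symmetrically $2$-separated — and still a sequence of \emph{unit} vectors — in $(X,\vertiii{\cdot})$. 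The equivalence of $\vertiii{\cdot}$ with $\|\cdot\|$ on all of $X$ follows from the equivalence on $E$ together with the definition of the extension.

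The main obstacle I anticipate is the interface between the two steps: making sure the vectors that are $\vertiii{\cdot}$-normalised and symmetrically $2$-separated inside $E$ do not get their norm changed by the extension procedure, since the infimum-type extension above only guarantees $\vertiii{x}\le\vertiii{x}_E$ for $x\in E$ a priori, and one needs the reverse inequality on $E$ to keep the sequence on the unit sphere and keep the separation at exactly $2$ rather than merely close to $2$. This is handled by choosing the extension so that $\vertiii{\cdot}|_E=\vertiii{\cdot}_E$ exactly — e.g.\ using a Hahn--Banach-type argument on the gauge, or by working with the dual description $\vertiii{x}=\sup\{\langle f,x\rangle: f\in K\}$ where $K$ is a $w^*$-compact convex symmetric set whose restriction to $E^*$ realises $\vertiii{\cdot}_E$. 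Once the norms agree on $E$, everything else is a routine verification, and the target separation constant $2$ (rather than $2-\e$) is delivered directly by the explicit $c_0$-type example recalled in the introduction; hence no extra $\e$ is lost, which is exactly what the statement asserts.
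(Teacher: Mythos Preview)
Your two-step plan (renorm a subspace $E$, then extend to $X$) can in principle be made to work, but the specific renorming you propose for $E$ has a genuine gap: you cannot in general put an equivalent norm on $E$ under which $(e_j)$ becomes isometrically equivalent to the canonical basis of $c_0$, since that would force $E$ to contain an isomorphic copy of $c_0$, which fails whenever $E$ is, say, reflexive. The vaguer alternative (``make $(e_j)$ behave like the summing-type basis'') is not a construction. So you have correctly identified the \emph{target} (unit vectors with $\vertiii{e_i\pm e_j}=2$) but not a viable mechanism for reaching it inside $E$; and the second step then inherits this vagueness, since you have nothing concrete to extend.

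The paper's proof sidesteps both of your steps by working from the outset with functionals defined on all of $X$. It takes an Auerbach system $\{x_i,f_i\}_{i=1}^\infty$ in $X$ (so $\|x_i\|=\|f_i\|=1$, $\langle f_i,x_j\rangle=\delta_{ij}$, and crucially each $f_i\in X^*$), sets
\[
\nu(x):=\sup_{i\neq k}\bigl(|\langle f_i,x\rangle|+|\langle f_k,x\rangle|\bigr),\qquad \vertiii{x}:=\max\{\|x\|,\nu(x)\},
\]
and reads off $\|x\|\le\vertiii{x}\le 2\|x\|$, $\vertiii{x_i}=1$, and $\vertiii{x_i\pm x_j}\ge\nu(x_i\pm x_j)=2$ directly from biorthogonality. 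There is no extension problem because the $f_i$ already act on all of $X$. If you insist on starting from a basic sequence rather than an Auerbach system, the clean fix is not to renorm $E$ and then extend the norm, but to Hahn--Banach extend the coordinate functionals $e_j^*$ to $X^*$ first and run the same one-line formula; this shows that the extension difficulties you anticipated were an artefact of the detour through $E$.
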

This phenomenon was already observed by Kottman (\cite[Theorem~7]{Kottman}), who showed that every infinite-dimensional Banach space admits a renorming such that the new unit sphere contains a $2$-separated sequence. An inspection of his argument shows that actually the resulting sequence is symmetrically $2$-separated. We note in passing that van Dulst and Pach (\cite{vDP}) proved a stronger renorming result; however, we shall not require it here. For convenience of the reader, we  present a simple proof of Proposition~\ref{renorm with 2-separation}. 

\begin{proof}By the main result in \cite{Day}, $X$ contains an Auerbach system $\{x_i,f_i\}_{i=1}^\infty$. Set
$$
\nu(x) := \sup_{i\neq k \in\N}  (\left|\langle f_i,x\rangle\right|+\left|\langle f_k,x\rangle\right|)
$$
and let us define $$\vertiii{x} =\max\{\|x\|, \nu(x)\}\quad (x\in X).$$ Then $\vertiii{\cdot}$ is an equivalent norm on $X$ as $\|x\|\leq \vertiii{x} \leq 2\|x\|$ ($x\in X$). From the biorthogonality we deduce that $\nu(x_i)=1$, so $\vertiii{x_i} = 1$ ($i\in \N$). Moreover, $$\vertiii{x_i \pm x_j}\geq \nu (x_i \pm x_j)=2 \quad (i,j\in \N, i\neq j).$$ Hence, $(x_i)_{i=1}^\infty$ is a symmetrically 2-separated sequence in the unit sphere of $(X,\vertiii{\cdot})$.
\end{proof}

A modification of the above renorming yields a new norm $\vertiii{\cdot}$ that approximates $\norm$ and such that the unit sphere of $\vertiii{\cdot}$ contains a symmetrically $(1+\e)$-separated sequence. This shows how simple the symmetric version of the Elton--Odell Theorem would be if we were allowed to consider arbitrarily small perturbations of the original norm.
\begin{proposition}Let $(X,\norm)$ be an infinite-dimensional Banach space. Then, for every $\e>0$, $X$ admits an equivalent norm $\vertiii{\cdot}$ such that $\norm \leq \vertiii{\cdot} \leq (1+\e)\norm$ and $S_{(X,\vertiii{\cdot})}$ contains an infinite symmetrically $(1+\delta)$-separated subset, for some $\delta>0$.
\end{proposition}
In other words, for every infinite-dimensional Banach space, the set of all equivalent norms for which the symmetric version of the Elton--Odell theorem is true is dense in the set of all equivalent norms.
\begin{proof}The very basic idea is that in the definition of $\nu$ we replace the sum of the two terms by an approximation of their maximum. We may assume clearly that $\e \in (0,1)$ (and in this case we could actually choose $\delta=\e$); we then find a norm $\Phi$ on $\R ^2$ such that: 
\begin{romanenumerate}
\item $\norm _\infty \leq \Phi \leq (1+\e)\cdot\norm_\infty$;
\item $\Phi((1,0))=\Phi((0,1))=1$;
\item $\Phi((1,1))=1+\e$.
\end{romanenumerate}
For example, one can choose
$$
\Phi((\alpha,\beta)):= \max\left\{ \left\Vert (\alpha,\beta) \right\Vert_\infty
,(1+\e)\cdot {\left| \alpha + \beta \right|}{/2} \right\}.
$$
We also fix an Auerbach system $\{x_i,f_i\}_{i\in \N}$. Then we set
$$\nu(x) := \sup_{i\neq k \in \N}  \Phi( \left| \langle f_i , x \rangle \right| , \left| \langle f_k , x \rangle \right| )\quad(x\in X)$$
and, exactly as above, $$\vertiii{x} =\max\{\|x\|, \nu(x)\}\quad (x\in X).$$
Note that 
$$\nu(x)\leq (1+\e)\sup_{i\neq k \in \N} \max\{ \left| \langle f_i , x \rangle \right| , \left| \langle f_k , x \rangle \right| \} \leq (1+\e)\|x\|;$$
hence $\norm \leq \vertiii{\cdot} \leq (1+\e)\norm$.\smallskip

Finally, from the biorthogonality we deduce that $\nu(x_i)=1$ ($i\in \N$) and $\nu (x_i \pm x_j)=1+\e$ ($i,j\in \N$, $i\neq j$). Hence $\vertiii{x_i} = 1$ and $\vertiii{x_i \pm x_j}\geq 1+\e$ for $i\neq j$. Consequently, $(x_i)_{i=1}^\infty$ is a symmetrically $(1+\e)$-separated sequence in the unit sphere of $(X,\vertiii{\cdot})$.
\end{proof}

We conclude this part with the following remark, in sharp contrast with Proposition \ref{renorm with 2-separation}. It probably belongs to obvious mathematical folklore, but it fits so well here, that we could not resist the temptation of including it.

\begin{remark}\label{strictly convex and separation} Every separable Banach space admits a strictly convex renorming (even a~locally uniformly rotund one; see, \emph{e.g.}, \cite[Theorem 8.1]{FHHMZ}), so in particular the unit sphere under such renorming contains no $2$-separated sequences. Indeed, let $X$ be any Banach space and let $x,y\in S_X$ be linearly independent and $2$-separated vectors. Then $\frac{x-y}{2}$, the midpoint of the non-trivial segment joining $x$ and $-y$, is a point on the unit sphere of $X$; hence it is a witness that $X$ is not strictly convex. \smallskip 

The previous assertion is no longer true if the separability assumption is dropped. In fact, Partington (\cite[Theorem 1]{part}) showed that if $\Gamma$ is uncountable then every renorming of $\ell_\infty(\Gamma)$ contains an isometric copy of $\ell_\infty$. In particular, the unit sphere of every renorming of $\ell_\infty(\Gamma)$ contains a $2$-separated sequence.
\end{remark}

\subsection{On a problem by Maluta and Papini} \label{Problem MaPa}
In \cite[Theorem~2.6]{MaPa}, the authors show that for every Banach space $X$ one has $K(X)\leq2\cdot (1-\delta_X(1))$, where $\delta_X$ denotes the modulus of convexity of $X$. It follows in particular that every super-reflexive Banach space $X$ admits a renorming $\vertiii{\cdot}$ such that $K\left(X,\vertiii{\cdot}\right)<2$. They ask whether every space which fails to contain $c_0$ or $\ell_1$, or at least every reflexive space, admits a renorming with the Kottman constant smaller than $2$. To the best of our knowledge, and also according to the authors themselves, there seems to be no published solution to these questions. We thus take this occasion to mention explicitly the answers.

\smallskip
An example of a Banach space which does not contain isomorphic copies of either $c_0$ or $\ell_1$ and still has the Kottman constant equal to $2$ under every renorming is the Bourgain--Delbaen space $Y_{{\rm BD}}$ (\cite[Section~5]{BoDe}). $Y_{\rm BD}$ is the first example of a $\mathscr{L}_\infty$-space that is saturated by reflexive subspaces; in particular it contains no copy of $c_0$ or $\ell_1$. Still, every renorming of $Y_{{\rm BD}}$ has the Kottman constant (even $K^s$) equal to $2$ by \cite[Proposition~3.4]{CaPa} already quoted above. More generally, every predual of $\ell_1$ is another example of space for which the symmetric Kottman constant is equal to 2 under every renorming; we note that the space constructed by Argyros and Motakis is such an example, which does not contain $c_0$ either.

\smallskip
We next observe that if $X$ is any renorming of the Tsirelson space $T$ (actually, the space $T$ we consider is the one constructed by Figiel and Johnson \cite{FiJo}, see also \cite[Example~2.e.1]{LiTzaI}, and it is the dual to the original Tsirelson's space $T^*$ \cite{Tsirelson}), then $K^s(X)=2$. Our argument here will exploit the construction of spreading models, so let us briefly introduce this notion. The following important result is due to Brunel and Sucheston, \cite[Proposition~1]{BrSu}.

\begin{proposition} Let $(x_n)_{n=1}^\infty$ be a bounded sequence in a Banach space $X$. Then there exists a subsequence $(y_n)_{n=1}^\infty$ of $(x_n)_{n=1}^\infty$ such that for every $k\in\N$ and scalars $\alpha^1,\dots,\alpha^k$ the following limit exists:
$$\lim_{\begin{matrix} n_1<\dots<n_k \\ n_1\to\infty \end{matrix}} 
\left\Vert \sum_{i=1}^k \alpha^i y_{n_i} \right\Vert.$$
\end{proposition}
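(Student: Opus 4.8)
The plan is to realise the iterated limit as a genuine limit along a carefully constructed subsequence, using a double application of Ramsey's theorem combined with a diagonal argument. First I would reduce to the case of rational coefficients: it suffices to arrange that the limit $\lim_{n_1<\dots<n_k,\, n_1\to\infty}\|\sum_{i=1}^k\alpha^iy_{n_i}\|$ exists for every $k$ and every rational tuple $(\alpha^1,\dots,\alpha^k)\in\Q^k$, since the function $(\alpha^1,\dots,\alpha^k)\mapsto\|\sum\alpha^iy_{n_i}\|$ is $1$-Lipschitz in $\ell_1^k$-norm (with a bound from $\sup_n\|x_n\|$), and a uniformly equicontinuous family of functions converging on a dense set converges everywhere. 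Since $\bigcup_k\Q^k$ is countable, a diagonalisation will let me handle all of these simultaneously once I know how to handle a single fixed $k$ and a single fixed coefficient vector.

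For a fixed $k$ and fixed $\alpha=(\alpha^1,\dots,\alpha^k)$, the key step is an application of Ramsey's theorem. Colour each $k$-element subset $\{n_1<\dots<n_k\}\subseteq\N$ according to which dyadic interval $[\,j2^{-m},(j+1)2^{-m})$ the quantity $\|\sum_{i=1}^k\alpha^iy_{n_i}\|$ falls into; this is a finite colouring (the values lie in a bounded interval), so by the infinite Ramsey theorem there is an infinite set $M\subseteq\N$ all of whose $k$-subsets have the same colour, i.e.\ on which the norm is pinned down to within $2^{-m}$. Doing this for $m=1,2,3,\dots$ and passing to a nested sequence of infinite sets $M\supseteq M_1\supseteq M_2\supseteq\cdots$ yields, on the diagonal set, convergence of $\|\sum\alpha^iy_{n_i}\|$ as $n_1\to\infty$ with $n_1<\dots<n_k$ and all $n_i$ in the diagonal set. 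I would then run this over an enumeration $(k_\ell,\alpha_\ell)_{\ell=1}^\infty$ of $\bigcup_k\Q^k$, thinning the subsequence at each stage, and take a final diagonal subsequence $(y_n)_{n=1}^\infty$ that works for every $(k,\alpha)$ at once.

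The main obstacle is purely bookkeeping: one must check that the final diagonal subsequence genuinely witnesses the iterated limit for \emph{all} $k$ simultaneously, that the thinning at stage $\ell$ does not destroy the convergence already secured at earlier stages (this is automatic because convergence of the limit along a subsequence is inherited by any further subsequence, once one notes that the limit along $M$ equals the limit along any infinite $M'\subseteq M$), and that the reduction from rational to real coefficients is uniform in $k$ only in the weak sense needed — namely, for each fixed $k$ the Lipschitz constant is uniform, which is all that is required. There is no real analytic difficulty; the content is entirely in organising the nested Ramsey applications and the diagonalisation so that the single extracted subsequence $(y_n)$ serves every finite linear combination. Once that is in place, the existence of the stated limit is immediate from the construction.
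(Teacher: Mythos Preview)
The paper does not give its own proof of this proposition; it merely quotes it as a result of Brunel and Sucheston \cite{BrSu} and moves on. So there is nothing in the paper to compare your argument against.

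That said, your plan is correct and is precisely the standard Ramsey-theoretic proof of the Brunel--Sucheston theorem: finite colouring of $k$-subsets by dyadic intervals, infinite Ramsey, nesting over the mesh size, then diagonalisation over a countable set of coefficient tuples, with the passage from a countable dense set of coefficients to all coefficients handled by the $\ell_1^k$-Lipschitz estimate. Two small remarks. First, since the paper works over both real and complex scalars, your ``rational coefficients'' should be read as coefficients in $\Q+i\Q$ in the complex case; this set is still countable, so the diagonalisation is unaffected. Second, when you pass to the final diagonal subsequence, it is not literally a subsequence of each $M_\ell$ but only eventually contained in $M_\ell$; you note this implicitly, and it is exactly what is needed because the limit is taken as $n_1\to\infty$, so only tails matter.
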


For a vector $(\alpha^i)_{i=1}^\infty\in c_{00}$, denote such a limit by $L\left((\alpha^i)_{i=1}^\infty\right)$; it is immediate to check that $L$ defines a seminorm on $c_{00}$ and that such a seminorm is actually a norm provided that the sequence $(y_n)_{n=1}^\infty$ is not convergent in $X$. In such a case, the completion of $c_{00}$ under the norm $L$ (which we will henceforth denote $\norm$) is called a \emph{spreading model} of $X$. Let us denote by $e_n=(\delta_n^j)_{j=1}^\infty$ ($n\in \mathbb N$) the $n\textsuperscript{th}$ vector of the canonical basis of $c_{00}$. The sequence $(e_n)_{n=1}^\infty$ will be called the \emph{fundamental sequence} of the spreading model. One its fundamental property, which is actually an obvious consequence of the definitions, is that the fundamental sequence is \emph{invariant under spreading}, \emph{i.e.}, for every choice of natural numbers $n_1<\dots<n_k$
$$\left\Vert \sum_{i=1}^k \alpha^i e_i \right\Vert=
\left\Vert \sum_{i=1}^k \alpha^i e_{n_i} \right\Vert.$$

We will be interested in the question when a Banach space $X$ admits a spreading model isomorphic to $\ell_1$. A first very simple consequence of the Rosenthal's $\ell_1$-theorem and the invariance under spreading is the following (see, \emph{e.g.}, \cite[Lemme~II.1.1]{BeaLa}): if $F$ is a spreading model of a Banach space $X$, then $F$ is isomorphic to $\ell_1$ if and only if the fundamental sequence of $F$ is equivalent to the canonical basis of $\ell_1$. The next characterization is due to Beauzamy, \cite[Theorem~II.2]{Beauzamy} (it may also be found in \cite[Theoreme~II.2.3]{BeaLa}).

\begin{proposition} Let $X$ be a Banach space. Then the following are equivalent:
\begin{romanenumerate}
\item $X$ admits a spreading model isomorphic to $\ell_1$;
\item there are $\delta>0$ and a bounded sequence $(x_n)_{n=1}^\infty$ in $X$ such that, for every $k\in\N$, $\e_1,\dots,\e_k=\pm1$ and $n_1<\dots<n_k$ one has
$$\frac{1}{k}\left\Vert \sum_{i=1}^k \e_i x_{n_i} \right\Vert \geq\delta;$$
\item for every $\eta>0$ there is a bounded sequence $(x_n)_{n=1}^\infty$ in $X$ such that, for every $k\in\N$, $\e_1,\dots,\e_k=\pm1$ and $n_1<\dots<n_k$ one has
$$1-\eta \leq \frac{1}{k}\left\Vert \sum_{i=1}^k \e_i x_{n_i} \right\Vert \leq 1+\eta.$$
\end{romanenumerate}
\end{proposition}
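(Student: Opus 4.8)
We prove the equivalences along the cycle $(iii)\Rightarrow(ii)\Rightarrow(i)\Rightarrow(iii)$. The implication $(iii)\Rightarrow(ii)$ is immediate: applying (iii) with $\eta=\tfrac12$ yields a bounded sequence witnessing (ii) with $\delta=\tfrac12$. For $(ii)\Rightarrow(i)$, let $(x_n)$ and $\delta$ be as in (ii). By Rosenthal's $\ell_1$-theorem, after passing to a subsequence we may assume $(x_n)$ is either equivalent to the $\ell_1$-basis or weakly Cauchy. In the first case a further subsequence generates a spreading model whose fundamental sequence, being a spreading limit of an $\ell_1$-basis, is again equivalent to the $\ell_1$-basis, so the spreading model is isomorphic to $\ell_1$. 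In the second case set $u_j:=x_{2j-1}-x_{2j}$; this is bounded and weakly null, and applying the hypothesis of (ii) to the $2k$ vectors $x_{2j_1-1},x_{2j_1},\dots,x_{2j_k-1},x_{2j_k}$ gives $\tfrac1k\|\sum_{i=1}^k\e_iu_{j_i}\|\geq 2\delta$ for all $k$, all signs $\e_i$ and all $j_1<\dots<j_k$; in particular $(u_j)$ is seminormalised. Passing to a basic subsequence and then to one generating a spreading model, the fundamental sequence $(e_n)$ is invariant under spreading, satisfies $\|\sum_{i=1}^k\e_ie_i\|\geq 2\delta k$ for all signs, and — being the fundamental sequence of a spreading model of a weakly null sequence — is suppression-$1$-unconditional.

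To pass from this sign estimate to a genuine lower $\ell_1$-estimate I would use a mass-splitting argument: replacing a summand $a e_p$ by $\tfrac a2 e_p+\tfrac a2 e_{p'}$ at a fresh index $p'$ does not increase the norm (by convexity together with the invariance under spreading, after first spreading the tail to free up a slot), so iterating reduces $\|\sum_{i=1}^k a_ie_i\|$ with $a_i\geq0$ to a scalar multiple of $\|\sum_{j=1}^M e_j\|\geq 2\delta M$, and unconditionality covers arbitrary scalars. Since $\|\sum a_ie_i\|\leq\sum|a_i|$ trivially, $(e_n)$ is equivalent to the $\ell_1$-basis, and by the Beauzamy--Lapresté fact quoted above the spreading model is isomorphic to $\ell_1$. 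For $(i)\Rightarrow(iii)$, let $F$ be a spreading model of $X$ isomorphic to $\ell_1$, generated by a bounded sequence $(y_n)$ with fundamental sequence $(e_n)$; by the remark preceding the statement, $(e_n)$ is equivalent to the $\ell_1$-basis. Given $\eta>0$, James's non-distortion theorem produces a normalised block sequence $(h_j)$ of $(e_n)$ that is $(1+\eta)$-equivalent to the $\ell_1$-basis; this equivalence is inherited by every finite sub-selection, so $\tfrac1k\|\sum_{i=1}^k\e_ih_{j_i}\|_F\in[(1+\eta)^{-1},1]$ for all $k$, all signs and all $j_1<\dots<j_k$. It remains to transfer $(h_j)$ back into $X$: using the Brunel--Sucheston convergence of $\|\cdot\|_X$ along far-out, spread-out tuples of $(y_n)$, together with the invariance under spreading of $(e_n)$ (which renders the gaps between the supporting blocks of the $h_j$ irrelevant to the target norms), a diagonal construction yields blocks $g_j$ of $(y_n)$, supported sufficiently far out, with $\big|\,\|\sum_{i=1}^k\e_ig_{j_i}\|_X-\|\sum_{i=1}^k\e_ih_{j_i}\|_F\,\big|$ as small as desired for all selections; as $(g_j)$ is a block sequence it is bounded, and combining the two displays gives (iii) up to an error that vanishes with $\eta$.

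The main obstacle is the transfer step in $(i)\Rightarrow(iii)$: Brunel--Sucheston only asserts convergence for each fixed finite coefficient pattern as the indices tend to infinity, and the rate of convergence need not be uniform in the number of vectors, so a naive diagonalisation breaks down on long selections. The remedy is to first replace $(y_n)$ by a subsequence for which the per-term defect from the spreading-model norm is summable; then the averaged error $\tfrac1k\big|\,\|\sum_{i=1}^k\e_ig_{j_i}\|_X-\|\sum_{i=1}^k\e_ih_{j_i}\|_F\,\big|$ can be kept below any prescribed threshold simultaneously for all $k$ — small $k$ being handled directly by Brunel--Sucheston, large $k$ by the summable-defect bound — which suffices precisely because (iii) concerns only the averages $\tfrac1k\|\cdot\|$. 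The auxiliary results invoked (Rosenthal's $\ell_1$-theorem, James's non-distortion theorem, and the suppression-unconditionality of spreading models of weakly null sequences) are classical and may be quoted.
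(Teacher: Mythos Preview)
The paper does not prove this proposition: it is quoted as a result of Beauzamy, with references to \cite[Theorem~II.2]{Beauzamy} and \cite[Th\'eor\`eme~II.2.3]{BeaLa}, and no argument is given. So there is no in-paper proof to compare against; what follows is an assessment of your attempt on its own merits.

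Your $(iii)\Rightarrow(ii)$ is trivially correct, and $(ii)\Rightarrow(i)$ is fine. The Rosenthal split is the right move; in the weakly Cauchy branch the difference sequence $(u_j)$ is weakly null with the claimed sign lower bound, and the mass-splitting device (replacing $ae_p$ by $\tfrac{a}{2}e_p+\tfrac{a}{2}e_{p'}$ never increases the norm, by convexity and $1$-subsymmetry, so the original norm dominates that of the fully split vector $\tfrac{1}{N}\sum_{j=1}^M e_j$) is a standard and correct way to upgrade the sign estimate to a genuine lower $\ell_1$-estimate.

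The gap is in $(i)\Rightarrow(iii)$, exactly where you flag it. The strongest standard form of the Brunel--Sucheston refinement says: after passing to a subsequence, for every $k$ and every $k\leq n_1<\dots<n_k$ and scalars $|\alpha_i|\leq 1$ one has
\[
\Big|\,\Big\|\sum_{i=1}^k\alpha_i y_{n_i}\Big\|_X-\Big\|\sum_{i=1}^k\alpha_i e_i\Big\|_F\,\Big|<\varepsilon_{n_1},
\qquad \varepsilon_m\to 0.
\]
The constraint $k\leq n_1$ is intrinsic to the construction (one stabilises $k$-term norms only once one is past index $k$), and it is precisely what blocks the transfer for selections with $n_1<k$. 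Your proposed fix posits a subsequence with a ``summable per-term defect'', so that the error for a $k$-term sum is bounded by $\sum_{i=1}^k\varepsilon_{n_i}$; this is not a known form of the construction, you give no argument for its existence, and there is no reason the defect should decompose additively over the terms. Consequently the dichotomy ``small $k$ by Brunel--Sucheston, large $k$ by summability'' rests on an unsubstantiated premise, and the transfer step does not close as written. Beauzamy's argument proceeds differently (via an iterated blocking that improves the $\ell_1$-equivalence constant of the fundamental sequence at each pass, so that the constraint $k\leq n_1$ is absorbed by restarting the extraction on the new block sequence); you may wish to consult \cite{BeaLa} for the details.
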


\begin{corollary} Suppose that a Banach space $X$ admits a spreading model isomorphic to $\ell_1$. Then for every renorming $\vertiii{\cdot}$ of $X$ one has $K^s(X,\vertiii{\cdot})=2$.
\end{corollary}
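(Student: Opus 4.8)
The plan is to deduce the corollary directly from the characterisation of spaces with an $\ell_1$ spreading model, specifically from condition (iii) of the preceding proposition, exploiting that this condition is visibly insensitive to passing to an equivalent norm. First I would fix an arbitrary equivalent norm $\vertiii{\cdot}$ on $X$, together with constants $0<m\leq M<\infty$ such that $m\|x\|\leq\vertiii{x}\leq M\|x\|$ for all $x\in X$. Since having a spreading model isomorphic to $\ell_1$ is an isomorphic property (and indeed condition (iii) of the proposition, unlike (i) or (ii), survives renorming after adjusting $\eta$ within the constants $m,M$), the space $(X,\vertiii{\cdot})$ still admits a spreading model isomorphic to $\ell_1$; hence it suffices to prove the claim for a single norm, which we may as well call $\norm$ again.

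Next I would invoke condition (iii): given $\eta>0$, we obtain a bounded sequence $(x_n)_{n=1}^\infty$ with
$$1-\eta\leq\frac1k\left\Vert\sum_{i=1}^k\e_ix_{n_i}\right\Vert\leq1+\eta$$
for all $k$, all signs $\e_i=\pm1$ and all $n_1<\dots<n_k$. Taking $k=1$ shows $1-\eta\leq\vertiii{x_n}\leq1+\eta$, so after normalising we set $u_n:=x_n/\vertiii{x_n}\in S_X$; the normalisation perturbs the block norms by a controlled amount. The key point is that taking $k=2$ and the two sign choices $(\e_1,\e_2)=(1,1)$ and $(1,-1)$ gives $\vertiii{x_n\pm x_m}\geq 2(1-\eta)$ for $n\neq m$, and after passing to $(u_n)$ one estimates
$$\vertiii{u_n\pm u_m}\geq\frac{\vertiii{x_n\pm x_m}}{1+\eta}-(\text{correction})\geq\frac{2(1-\eta)}{1+\eta}-O(\eta).$$
Thus for every $\eta>0$ the unit ball $B_X$ contains a symmetrically $\bigl(\tfrac{2(1-\eta)}{1+\eta}-O(\eta)\bigr)$-separated sequence, which by definition of the symmetric Kottman constant forces $K^s(X,\norm)\geq\tfrac{2(1-\eta)}{1+\eta}-O(\eta)$. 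Letting $\eta\to0^+$ yields $K^s(X,\norm)\geq 2$, and since trivially $K^s\leq 2$ always (all vectors lie in $B_X$), equality follows. As this holds for the arbitrary renorming $\vertiii{\cdot}$ fixed at the outset, we are done.

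The only genuine subtlety — and the step I would be most careful about — is the passage from the $x_n$ to the normalised $u_n$: one must check that the separation estimate does not deteriorate below $2$ in the limit. This is exactly the kind of routine but slightly fiddly estimate for which inequality (\ref{stupid bound}) and Lemma~\ref{lemma-ball-sphere} are tailor-made; alternatively one can simply absorb everything into a single $\eta$-dependent error term and let $\eta\to 0$. A clean way to avoid the fuss entirely is to note that condition (iii) already lets us choose the sequence with block-norm constants arbitrarily close to $1$, so $\vertiii{x_n}\to 1$ uniformly and the correction term is $O(\eta)$; no part of this obstructs the argument, it merely requires writing the bookkeeping carefully. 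There is no real obstacle beyond this: the corollary is essentially an immediate reading of condition (iii) through the lens of the definition of $K^s$.
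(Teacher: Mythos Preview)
Your proposal is correct and follows essentially the same route as the paper: reduce to a single norm via the isomorphic invariance of admitting an $\ell_1$ spreading model, apply condition (iii) with $k=1$ and $k=2$, and let $\eta\to 0^+$. Two small remarks: first, your parenthetical that ``condition (iii), unlike (i) or (ii), survives renorming'' is backwards --- it is condition (ii) (the existence of some $\delta>0$) that is manifestly isomorphism-invariant, and the paper invokes the equivalence (i)$\Leftrightarrow$(ii) for this step; second, the paper avoids your normalisation bookkeeping entirely by scaling the whole sequence by $\tfrac{1}{1+\eta}$ to land in $B_X$ and then invoking Lemma~\ref{lemma-ball-sphere} once, which gives the clean bound $K^s(X)\geq 2\tfrac{1-\eta}{1+\eta}$ with no $O(\eta)$ correction term --- exactly the alternative you allude to but do not carry out.
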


\begin{proof} From the equivalence between (i) and (ii) in the previous proposition, it is obvious that if $X$ admits a spreading model isomorphic to $\ell_1$, then the same occurs to $(X,\vertiii{\cdot})$. Hence, we only need to show that $K^s(X)=2$. Applying now (iii) of the same proposition yields, for every $\eta>0$, a sequence $(x_n)_{n=1}^\infty$ such that $$1-\eta \leq \|x_n\|\leq 1+\eta\quad\text{ and }\quad\frac{1}{2}\|x_n\pm x_k\|\geq 1-\eta\quad (n,k \in\N, n\neq k).$$ Consequently, the sequence $(\frac{x_n}{1+\eta})_{n=1}^\infty\subseteq B_X$ is symmetrically $2\cdot\frac{1-\eta}{1+\eta}$-separated. Lemma~\ref{lemma-ball-sphere} then yieds $K^s(X)\geq 2\cdot\frac{1-\eta}{1+\eta}$ and letting $\eta \to 0^+$ concludes the proof.
\end{proof}

It is known (see, \emph{e.g.}, \cite[Proposition~IV.2.F.2]{BeaLa}) that every spreading model of $T$ is isomorphic to $\ell_1$. We thus immediately infer the following corollary.

\begin{corollary} Let $T$ be the Tsirelson space. Then for every renorming $\vertiii{\cdot}$ of $T$ we have $K^s(T,\vertiii{\cdot})=2$.
\end{corollary}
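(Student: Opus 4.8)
The plan is to combine two results already established in the excerpt. The key observation is that the final statement is an immediate corollary of the previous one, which asserts that any Banach space admitting a spreading model isomorphic to $\ell_1$ has symmetric Kottman constant $2$ under every equivalent renorming. Hence the entire burden is reduced to verifying that the Tsirelson space $T$ (in the Figiel--Johnson convention, the predual of the original Tsirelson space $T^*$) admits a spreading model isomorphic to $\ell_1$.

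First I would invoke the cited fact (\cite[Proposition~IV.2.F.2]{BeaLa}) that \emph{every} spreading model of $T$ is isomorphic to $\ell_1$. Since $T$ is infinite-dimensional and reflexive, one can certainly extract from any bounded non-convergent sequence a subsequence generating a spreading model, so such a spreading model exists and is automatically $\ell_1$. With this in hand, the previous corollary applies verbatim with $X=T$: for any renorming $\vertiii{\cdot}$ of $T$, the space $(T,\vertiii{\cdot})$ still admits a spreading model isomorphic to $\ell_1$ (by the equivalence of (i) and (ii) in the Beauzamy characterisation, since conditions (i)--(iii) are plainly renorming-stable up to constants), whence $K^s(T,\vertiii{\cdot})=2$.

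The proof therefore amounts to two lines: cite that every spreading model of $T$ is $\ell_1$, then apply the preceding corollary. There is no real obstacle here; the only subtlety worth a word is making sure the reader sees that the spreading-model property is genuinely renorming-invariant, but that has already been handled inside the proof of the preceding corollary (which opens with exactly that remark), so the present proof can simply defer to it. I would write:

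\begin{proof}
It is known (see, \emph{e.g.}, \cite[Proposition~IV.2.F.2]{BeaLa}) that every spreading model of $T$ is isomorphic to $\ell_1$; in particular, $T$ admits such a spreading model. The assertion is now an immediate consequence of the previous corollary.
\end{proof}
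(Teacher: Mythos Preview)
Your proposal is correct and matches the paper's approach exactly: the paper also simply cites \cite[Proposition~IV.2.F.2]{BeaLa} to the effect that every spreading model of $T$ is isomorphic to $\ell_1$, and then invokes the preceding corollary. Your added remarks about existence of a spreading model and renorming-invariance are harmless elaborations of what the paper leaves implicit.
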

In particular, we have an example of a reflexive Banach space every whose renorming has symmetric Kottman constant equal to $2$; this is the desired counterexample to the question in \cite{MaPa}.

\subsection{A few open problems} Of course the main unresolved question we should mention here is the validity of Theorem~B for every Banach space, which is already the first open problem in \cite{CaPa}.
\begin{problem}\label{sym E--O} Is the symmetric version of the Elton--Odell theorem valid for every Banach space? Namely, is it true that for every Banach space there are $\e>0$ and a symmetrically $(1+\e)$-separated sequence of unit vectors?
\end{problem}

From our results it follows that it would be sufficient to prove the result under the additional assumption that $X$ is hereditarily indecomposable or non-reflexive. In particular, a way to solve Problem~\ref{sym E--O} would be to find a symmetric version of the result by Kryczka and Prus. For this reason, we can also ask the following:
\begin{problem} Is there a constant $c>1$ such that for every non-reflexive Banach space $X$ one has $K^s(X)\geq c$?
\end{problem}

One further immediate deduction from Lemma \ref{lemma-ball-sphere} is that if $Z$ is an isometric quotient of a Banach space $X$, then $K^s(X)\geq K^s(Z)$; this is formally recorded \emph{e.g.} in \cite[Proposition 2.3]{KaKo}. In particular, every Banach space with an infinite-dimensional reflexive quotient contains a symmetrically $(1+\e)$-separated sequence of unit vectors, for some $\e>0$. If we combine this with the, already mentioned more than once, fact that every infinite-dimensional $\mathscr{L}_\infty$-space $X$ satisfies $K^s(X)=2$, we infer that a positive answer to the following problem would solve in the positive the main Problem \ref{sym E--O}.
\begin{problem}\label{dichotomy refl_quotient or L_inf} Does every infinite-dimensional Banach space either contain an infinite-dimensional $\mathscr{L}_\infty$-space or admit an infinite-dimensional reflexive quotient?
\end{problem}

Concerning spreading models, we have proved above that $K^s(X)=2$ whenever $X$ admits a spreading model isomorphic to $\ell_1$. We do not know whether an analogous result for $c_0$ holds true too.
\begin{problem} Suppose that a Banach space $X$ admits a spreading model isomorphic to $c_0$. Does it follow that $K^s(X)=2$?
\end{problem}
More in general, one may even ask whether the Kottman's constant of a Banach space is lower bounded by that of its spreading models, \emph{i.e.}, the following:
\begin{problem}\label{K lower bounded by spreading} Let $X$ be a Banach space and let $Z$ be a spreading model of $X$. Is it true that $K^s(X)\geq K^s(Z)$? Of course, the same question may be posed for $K(\cdot)$. 
\end{problem}

In Remark \ref{strictly convex and separation} we have mentioned the fact that whenever $\Gamma$ is an uncountable set, the space $\ell_\infty(\Gamma)$ has no strictly convex renorming; there actually exist examples of spaces with potentially smaller density character, for example $\ell_\infty/c_0$ (\cite{Bourgain}), that admit no strictly convex renorming. (One has to bear in mind that the space $\ell_\infty(\Gamma)$ has density character equal to $2^{|\Gamma|}$ as long as $\Gamma$ is infinite, however it may happen that in some models of set theory $2^{\aleph_0} = 2^{|\Gamma|}$ for all uncountable sets of cardinality less than the continuum.) This is related to a question of A.~Aviles (\cite[Question 7.7]{GaKu}) of whether there exists, without extra set-theoretic assumptions, a Banach space with density character $\aleph_1$ which has no strictly convex renorming. We may then ask the following related question.
\begin{problem} Does there exist in ZFC a Banach space $X$ with density character $\aleph_1$ such that the unit sphere of every renorming of $X$ contains a $2$-separated sequence?
\end{problem}

\subsubsection{Toroidally separated sequences}
Let $X$ be a complex normed space. We may naturally adjust the definition of symmetric separation to encompass complex number of modulus 1. Thus, let us call a set $A\subset X$ ($\delta+$)-\emph{toroidally separated} (respectively, $\delta$-\emph{toroidally separated}) ($\delta > 0$) when for all distinct $x,y\in A$ and complex numbers $\theta$ with $|\theta |=1$ we have $\|x-\theta y\|> \delta$ (respectively, $\|x-\theta y\|\geqslant \delta$). A quick inspection of Delpech's proof of the main theorem in \cite{delpech} reveals that the unit sphere of a complex asymptotically uniformly convex space contains a toroidally $(1+\e)$-separated sequence, for some $\e>0$. Similarly, Theorem~C has a natural counterpart in the complex case for toroidally separated sequences. It is then reasonable to ask whether the theorems of Kottman and Elton--Odell have such counterparts too.

\medskip{}
\textbf{Acknowledgments.} The authors wish to express their gratitude to the anonymous referee for the careful reading of the manuscript and for suggesting the questions presented here in Problems \ref{dichotomy refl_quotient or L_inf} and \ref{K lower bounded by spreading}. We are also grateful to J.~M.~F.~Castillo for many stimulating conversations concerning the topic of symmetric separation.

\end{document}